\DeclarePairedDelimiterX{\Iintv}[1]{\llbracket}{\rrbracket}{\iintvargs{#1}}
\NewDocumentCommand{\iintvargs}{>{\SplitArgument{1}{,}}m}
{\iintvargsaux#1} %
\NewDocumentCommand{\iintvargsaux}{mm} {#1\mkern1.5mu,\mkern1.5mu#2}
\date{}
\newtheorem{theoreme}{Theorem}
\newtheorem{lemme}{Lemma}
\newtheorem{proposition}{Proposition}
\newtheorem{algo}{Algorithm}
\newtheorem{definition}{Definition}
\newtheorem{remark}{Remark}
\DeclareMathOperator*\bigcircop{\bigcirc}
\title{Approximation by orthonormal polynomials associated with even exponential weights}
\author{Bastien Grosse $^{1}$}
\address{$^1$ Nantes Université, Laboratoire de mathématiques Jean Leray, LMJL, UMR 6629, 2 rue de la Houssinière, BP 92208 F-44322 NANTES Cedex 3 }
\numberwithin{equation}{section}
\numberwithin{theoreme}{section}
\numberwithin{lemme}{section}
\numberwithin{proposition}{section}
\numberwithin{algo}{section}
\numberwithin{definition}{section}
\numberwithin{remark}{section}
\begin{document}
\maketitle

\begin{abstract}
In this paper, we prove a quantitative approximation result by orthonormal polynomials associated to an exponential weight of the form $e^{-\phi}$, where $\phi$ is an even polynomial with positive leading coefficient. This result is a consequence of a recursion relation for the orthonormal polynomials and of the strong Poincaré inequality. Simulations are provided at the end of the article, on smooth, non-smooth functions as well as in the Gaussian and the double well case.
\end{abstract}

\tableofcontents

\newpage
\section{Introduction}

\subsection{Context}

This paper is devoted to the approximation of functions by orthonormal polynomials associated with a weight $\rho(x) = e^{-\phi(x)}$, where $\phi$ is an even, nonconstant polynomial.Let us explain the context and the issues raised by this problem.  For this, let us take $f$ a function in the space $L^{2}(\rho)$, i.e $f$ is such that $f\sqrt{\rho}$ belongs to $L^{2}(\mathbb{R})$. If  $(\tilde{P}_{l})_{l\in\mathbb{N}}$ is the basis of orthonormal polynomials   associated with the weight $\rho$, and $N\in\mathbb{N}^{*}$, then $f$ can be projected on the finite-dimensionnal space $V_{N}:= Span\{\tilde{P}_{n} \ | \ n=0,1,...,N \}$. The orthogonal projection of $f$ on $V_{N}$ is denoted by $\pi_{V_{N}}f$. Following \cite{akhiezer}, the space  $Span\{\tilde{P}_{n} \ | \ n\in\mathbb{N} \}$ is total in $L^{2}(\rho)$, thus  the projection $\pi_{V_{N}}f$ converges in $L^{2}(\rho)$ norm toward $f$.  One may ask the following question:

$$
\mbox{\textbf{(Q)} At which speed does $\|f - \pi_{V_{N}}f  \|_{L^{2}(\rho)}$ converge toward 0 with respect to $N$ ?}
$$

\ \\
This question admits partial answers. The simpliest case for which a precise answer is known is the Hermite case ($\phi(x)=\dfrac{1}{2}(x^{2}+\ln(2\pi))$. The result, which has been proven in \cite{guo}, \cite{bessemoulinchatard} for the closely related Hermite functions, is proven here in section 3 for Hermite polynomials. It states that for all $k\in\mathbb{N}^{*}$, there exists a positive constant $ \Lambda_{k}$  such that, for all $f\in H^{k}(\rho)$, 

$$
\| f - \pi_{V_{N}}f \|_{L^{2}(\rho)} \leq \Lambda_{k} \| f\|_{H^{k}(\rho)} \dfrac{1}{N^{\frac{k}{2}}} .
$$

This result generalizes for the so called \textit{Freud weights}. Following \cite{lubinsky2007survey}, a  weight $\rho=e^{-\phi}$ is called  a Freud weight if $\phi$ satisfies the definition.

\begin{definition}[Freud weight]
The function $e^{-\phi}$ defines a Freud weight if :

\begin{itemize}
\item $\phi$ is even ;
\item $\phi'$ exists and $\phi'>0$ on $]0,+\infty[$ ;
\item $x\phi'(x)$ is strictly increasing with right limit 0 at 0 ;
\item there exist $\lambda, A,B>1$, and $C>0$ such that 
$$
A \leq \dfrac{\phi'(\lambda x)}{\phi'(x)} \leq B \ , \ \forall x\geq C .
$$
\end{itemize}
\end{definition}

Notice that $\phi$ is not necessarily a polynomial. For Freud weights, the following theorem gives a bound for the projection error.

\begin{theoreme}[\cite{LEVIN1987}]
Let  $r\geq 1$, $n\in\mathbb{N}$ and let $b_{n}$ be the unique positive solution of the equation 

$$
n = \dfrac{2}{\pi} \int_{0}^{1} \dfrac{(b_{n}t)\phi'(b_{n}t)}{\sqrt{1-t^{2}}} dt .
$$
There exists a constant $K>0$ such that for all functions $f : \mathbb{R} \mapsto \mathbb{R} $ having $r-1$ continuous derivatives, and such that $f^{(r)}$ is absolutely continuous and $f^{(r)}\in  L^{2}(\rho)$: 

$$
\|f - \pi_{V_{N}}f  \|_{L^{2}(\rho)}  \leq K \left(\dfrac{b_{n}}{n}\right)^{r}  \| f^{(r)} \|_{L^{2}(\rho)} .
$$

\end{theoreme}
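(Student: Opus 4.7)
The plan is to exploit the fact that $\pi_{V_N}$ is the orthogonal projection onto $V_N$ in $L^2(\rho)$, so
\[
\|f - \pi_{V_N} f\|_{L^2(\rho)} = \inf_{P \in V_N} \|f - P\|_{L^2(\rho)},
\]
which reduces the theorem to the construction of an explicit polynomial approximant $P_N$ achieving the claimed rate. Recall that $b_n$ is the Mhaskar--Rakhmanov--Saff number associated with $\phi$, and it captures the ``effective support'' of weighted polynomials of degree $n$; it is the natural length scale of the problem, and the appearance of $b_n/n$ rather than $1/n$ is the weighted analogue of the classical Jackson constant.

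First I would reduce to a bounded interval by invoking the Mhaskar--Saff infinite-finite range inequalities, which state that for any polynomial $P$ of degree $\leq n$ the $L^2(\rho)$ norm is essentially concentrated on $[-b_n, b_n]$, with exponentially small tails. On this interval, the change of variables $x = b_n \cos\theta$ transforms the problem into approximating $g(\theta) := f(b_n \cos\theta)$ on $[0, \pi]$ by trigonometric polynomials of degree $\leq n$, where classical approximation theory is available.

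Next I would establish the case $r=1$: for any $f$ with $f' \in L^2(\rho)$,
\[
\inf_{P \in V_n} \|f - P\|_{L^2(\rho)} \leq C \, \frac{b_n}{n} \, \|f'\|_{L^2(\rho)}.
\]
This follows from the classical Jackson theorem applied to $g$, using that $\partial_\theta g(\theta) = -b_n \sin\theta \cdot f'(b_n \cos\theta)$ generates the factor $b_n$, while the factor $1/n$ is the usual trigonometric Jackson rate. One typical construction is a de la Vallée-Poussin type partial-sum operator, transported back to the real line via the inverse change of variables. The general $r$ is then obtained by iteration: approximate $f^{(r)}$ by a polynomial of degree $\leq n-r$, integrate it $r$ times to produce a polynomial of degree $\leq n$ close to $f$, and observe that each integration multiplies the error by an additional $b_n/n$ factor.

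The hard part will be establishing the one-derivative Jackson inequality in weighted $L^2$ with the correct constant $b_n/n$. Making the change of variables rigorous requires both the sharp Mhaskar--Saff infinite-finite range inequality and a weighted Markov--Bernstein estimate of the form $\|P'\|_{L^2(\rho)} \leq C (n/b_n) \|P\|_{L^2(\rho)}$ for polynomials of degree $\leq n$; neither follows from the minimal Freud axioms alone, and both rest on a detailed analysis of the equilibrium measure for the external field $\phi$ (this is exactly the machinery developed in \cite{LEVIN1987} and subsequent work). Once these tools are in place, the iteration step lifting the $r=1$ estimate to general $r$ is essentially mechanical, modulo keeping careful track of the degree budget at each integration.
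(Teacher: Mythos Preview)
The paper does not prove this theorem. It is stated in the introduction as a known result, attributed to \cite{LEVIN1987}, and serves only as context for the paper's actual contribution (Theorem~\ref{theoreme/mainresult}). There is therefore no proof in the paper to compare your proposal against.

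That said, your sketch is a faithful outline of the standard argument in the Freud-weight literature: reduction to the best-approximation distance via orthogonality of the projection, the Mhaskar--Saff infinite-finite range inequality to localize to $[-b_n,b_n]$, a Jackson-type estimate with the correct scale $b_n/n$ for one derivative, and then iteration to reach order $r$. The only caveat is that the iteration step is slightly more delicate than you indicate: integrating a polynomial approximant of $f^{(r)}$ does raise the degree, but controlling the constants of integration and ensuring the resulting polynomial stays close to $f$ in the \emph{weighted} $L^2$ norm requires another appeal to the weighted inequalities (or, equivalently, one works directly with a weighted modulus of smoothness of order $r$ rather than iterating the $r=1$ case). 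This is handled in the cited reference, but it is where most of the technical work lies.
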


The positive number $b_{n}$ can be computed in some cases. For example, for the simple weight $\rho(x) = e^{-|x|^{\alpha}}$ with $\alpha>0$, $b_{n}$ is proportionnal to $n^{\frac{1}{\alpha}}$, thus the order of convergence is $r(\frac{1}{\alpha}-1)$. So far, the Freud class is the most general class of weights of the form $e^{-\phi}$, with $\phi$ having at most polynomial growth, for which there exists an approximation result (see \cite{lubinsky2007survey}). In general, it has been proven in \cite{Lubinsky2006}, that \textbf{(Q)} admits an answer for a large class of exponential weights. It is shown that if  $\phi$ is differentiable, and $\lim\limits_{x\to+\infty}\phi'(x)= +\infty$, $\lim\limits_{x\to-\infty}\phi'(x)= -\infty$, then there exists a sequence  $(\eta_{n})_{n\in\mathbb{N}}$ with limit 0 such that, for all $f\in H^{1}(\rho)$ absolutely continuous, we have

\begin{equation}
\|f - \pi_{V_{N}}f  \|_{L^{2}(\rho)} \leq \eta_{N} \| f'  \|_{L^{2}(\rho)}.
\label{equation/eta}
\end{equation}

Note that the sequence $(\eta_{n})_{n\in\mathbb{N}}$ is explicit only in some particular cases. To sum up, the question \textbf{(Q)} admits partial answers, depending on the form of the weight.
\ \\
\ \\
 Our goal is to give an answer to \textbf{(Q)} under the following light assumption \textbf{(H)}: 

$$
\textbf{(H)}: \ \  \mbox{ $\phi$ is  an even nonconstant polynomial of positive leading coefficient.   } 
$$

Thus, we can write $\phi$ in the standard basis :

\begin{equation}
\phi(x) = \sum_{p=0}^{m} v_{p} x^{2p} \label{equation/phi}
\end{equation} 

with $m\geq 1$. Notice that $e^{-\phi}$ is integrable under \textbf{(H)}, and that \textbf{(H)} does not require $\phi$ to be monotonic on $]0,+\infty[$, contrary to the Freud class. The main result (Theorem \ref{theoreme/mainresult}) is to our knowledge the first one concerning, non monotonic potentials on $\mathbb{R}^{+}$. It gives in particular a result for double well potentials.

\subsection{Notations and main result}

Throughout the paper, denote  

\begin{equation}
\rho: = e^{-\phi}
\end{equation}

the exponential weight where $\phi$ verifies \textbf{(H)}. Let $L^{2}(\rho)$ be the space of square integrable functions with respect to the measure $\rho(x)dx$ on $\mathbb{R}$. Let $H^{k}(\rho)$ be the Sobolev space consisting  of  functions $f$ such that
$$
 \sum_{i=0}^{k} \| f^{(i)} \|_{L^{2}(\rho)} < \infty,
$$

where $f^{(i)}$ is the $i$-th derivative of $f$ in the  distributional sense. The set $C_{c}^{\infty}(\mathbb{R})$ consists of  smooth functions with compact support. Recall that it is dense in $H^{k}(\rho)$ for all $k\in\mathbb{N}$. The space $\mathbb{R}_{n}[X]$ denotes the vector space of polynomials with real coefficients and with degree less than $n$. Polynomial will be identified with the associated polynomial function.
Let us now introduce more precisely the family of polynomials $(P_{n})_{n\in\mathbb{N}}$ mentionned in the beginning of the paper. For this, consider $(P_{n})_{n\in\mathbb{N}}$ as the sequence of monic (i.e with leading coefficient equal to 1) orthogonal polynomials of degree $n$ with respect to the weight $\rho$. Recall that this sequence can be built explicitely by applying the Gram-Schmidt orthogonalization process to the monomials $(x^{n})_{n\in\mathbb{N}}$.

Let $(\tilde{P}_{n})_{n\in\mathbb{N}}$ the sequence of orthonormal polynomials with respect to the weight $\rho$. They can be defined by normalization of the orthogonal polynomials, that is,

$$
\tilde{P}_{n} = \dfrac{P_{n}}{\| P_{n} \|_{L^{2}(\rho)}}, \ n \in\mathbb{N}.
$$

Equivalently, they can be defined via  the following recursion formula : 

\begin{equation}
\left\lbrace
\begin{array}{ccl}
x\tilde{P}_{n}(x) & = & a_{n+1} \tilde{P}_{n+1}(x) + a_{n} \tilde{P}_{n-1}(x), \ \ \   n\geq 1, \\
\tilde{P}_{0} &  = &  \dfrac{1}{a_{0}}, \\
\tilde{P}_{-1} &  = &  0 .
\end{array}
\right.
\label{equation/recurrence_polynome_2}
\end{equation}

In the preceeding formula, following  \cite{gautschi}, the coefficients  $a_{n}$ are defined by

\begin{equation}
\left\lbrace
\begin{array}{ccl}
a_{0} & = & \sqrt{\int_{\mathbb{R}}\rho(t) dt}, \\
a_{n} & = & \sqrt{\dfrac{\int_{\mathbb{R}}  P_{n}(t)^{2} \rho(t)dt}{\int_{\mathbb{R}} P_{n-1}(t)^{2} \rho(t) dt}}, \ \  n\geq 1.
\end{array}\right.
\label{equation/a_n}
\end{equation}

Remember that the sequence $(\tilde{P}_{n})_{n\in\mathbb{N}}$ is an Hilbert basis of $L^{2}(\rho)$. For a function $f\in L^{2}(\rho)$, the orthogonal projection $\pi_{V_{N}}f$   on $V_{N}$ is then defined by

$$
\pi_{V_{N}}f = \sum_{k=0}^{N}\left( \int_{\mathbb{R}} f \tilde{P}_{k} \rho dy\right)  \tilde{P}_{k}.
$$

In the sequel, we will refer to the case when $\phi(x) = \dfrac{1}{2}(x^{2}+\ln(2\pi))$ as the \textit{Hermite case}, and when $\phi(x) =(x-1)^{2}(x+1)^{2}$ as the \textit{double well case}, the later being of main interest in modern applications.

We are know ready to state our main result:

\begin{theoreme}
Let $\phi$ be a  polynomial satisfying \textbf{(H)}, and $k>m$. Let $(\gamma_{n})_{n\in\mathbb{N}}$ be the sequence defined recursively by:

$$\left\lbrace
\begin{array}{lll}
\gamma_{0} & = & 1 ,\\
\gamma_{n+1} & = & 2^{\gamma_{n}+1}+\gamma_{n}+2, \ \  n\in\mathbb{N}.
\end{array}\right.
$$

 There exists  $N_{0}\in\mathbb{N}$ and a constant $\Lambda_{\phi,k}>0$ depending only on $\phi$ and $k$, such that for all $N\geq N_{0}$ and $f\in H^{\gamma_{k}}(\rho)$, we have the bound

$$
\| f - \pi_{V_{N}}f \|_{L^{2}(\rho)}^{2} \leq \Lambda_{\phi,k} \| f\|_{H^{\gamma_{k}}(\rho)}^{2} \dfrac{1}{N^{\frac{k}{m}-1}} .
$$
\label{theoreme/mainresult}
\end{theoreme}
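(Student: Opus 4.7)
The plan is to begin with Parseval's identity, which rewrites the error as a tail sum of Fourier coefficients: $\|f-\pi_{V_N}f\|_{L^2(\rho)}^2=\sum_{n>N}|c_n(f)|^2$ with $c_n(f):=\int_\mathbb{R} f\,\tilde{P}_n\,\rho\,dx$. The goal is then to bound these $c_n(f)$ by Fourier coefficients of a derivative of $f$. For $f\in C_c^\infty(\mathbb{R})$ (dense in $H^k(\rho)$), integration by parts against the weight yields the key identity
$$
\int_\mathbb{R} f'\,\tilde{P}_n\,\rho\,dx \;=\; \int_\mathbb{R} f\,\bigl(\phi'\tilde{P}_n-\tilde{P}_n'\bigr)\,\rho\,dx.
$$
Since $\phi'$ has degree $2m-1$ and $\tilde{P}_n'$ has degree $n-1$, the polynomial $\phi'\tilde{P}_n-\tilde{P}_n'$ belongs to $V_{n+2m-1}$, and expanding it in the orthonormal basis via the three-term recurrence \eqref{equation/recurrence_polynome_2} produces a linear relation between $c_n(f')$ and the coefficients $c_j(f)$ with $|j-n|\leq 2m-1$. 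The coefficient in front of $\tilde{P}_{n+2m-1}$ is $2mv_m\,a_{n+1}a_{n+2}\cdots a_{n+2m-1}$, which under \textbf{(H)} grows like $n^{(2m-1)/(2m)}$ by the Freud asymptotics $a_n\sim c_\phi\,n^{1/(2m)}$ (themselves derivable from the recurrence under assumption \textbf{(H)}).

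By inverting this relation and absorbing the lower-order terms via the strong Poincaré inequality (which supplies the spectral-gap estimate of order $N^{1/m}$ available for the weight $\rho$), one obtains a base inequality of the form
$$
\|f-\pi_{V_N}f\|_{L^2(\rho)}^2 \;\leq\; C_\phi\,N^{-1/m}\,\|f\|_{H^{\gamma_{m+1}}(\rho)}^2,
$$
which corresponds to the minimal case $k=m+1$ of the theorem. The full rate $N^{-(k/m-1)}$ is then obtained by iterating this argument $k-m$ times: each iteration gains an extra factor $N^{-1/m}$, but at every step the remainder --- which involves $\phi'$ acting on a higher-order derivative of $f$ --- must be re-expanded in the orthonormal basis and re-estimated.

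The main obstacle is the propagation of the Sobolev index through these iterations. Because multiplication by $\phi'$ corresponds to a $(2m-1)$-banded operator on the Jacobi matrix with polynomially growing entries, each iteration forces one to bound remainders involving derivatives of $f$ multiplied by polynomial factors that grow with $n$; re-expanding these back into the orthonormal basis roughly squares the number of derivatives one must control. Tracking this effect produces precisely the tower recursion $\gamma_{n+1}=2^{\gamma_n+1}+\gamma_n+2$ in the Sobolev index. Handling these remainders uniformly is delicate because \textbf{(H)} does not require $\phi$ to be monotone on $]0,+\infty[$ --- in particular the double-well case is allowed --- so the extremal-polynomial techniques of \cite{LEVIN1987} cannot be invoked, and the strong Poincaré inequality is the essential substitute that delivers a spectral gap without any monotonicity assumption on $\phi'$.
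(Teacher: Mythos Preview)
Your outline has the right skeleton --- Parseval, integration by parts to trade coefficients of $f$ for those of $f'$, the asymptotics $a_n\sim c\,n^{1/(2m)}$, strong Poincar\'e, iteration --- but the step you call ``inverting this relation'' is precisely where the proof lives, and your sketch does not carry it out. Writing $c_n(f')$ as a linear combination of $c_j(f)$ for $|j-n|\le 2m-1$ gives a banded system; its inverse is not banded, and extracting a clean bound on $c_n(f)$ in terms of a single coefficient $c_{n+k}(\,\cdot\,)$ of some transform of $f$ is the whole point. Your appeal to a ``spectral-gap estimate of order $N^{1/m}$'' from the strong Poincar\'e inequality is also not right: the Poincar\'e inequality gives a fixed spectral gap, not one that grows with $N$.

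The paper performs this inversion explicitly by exploiting the differential relation $(\tfrac{d}{dx}+B_n)\tilde P_n=A_n\tilde P_{n-1}$ (Proposition~\ref{proposition/equadiff}) and introducing the operators $\hat L_p:=L_p^*(\,\cdot\,/A_p)$, so that $c_n(f)=c_{n+1}(\hat L_{n+1}f)$ exactly; iterating $k$ times gives identity~\eqref{equation/10}. For this one must divide by $A_p$, hence one first proves $A_p>0$ for $p$ large (Proposition~\ref{proposition/bornefraction}); this is the place where non-monotone $\phi$ (e.g.\ the double well) is handled, via Magnus's asymptotics rather than convexity. The rate $N^{-(k/m-1)}$ then comes simply from $\prod_{j=1}^k a_{n+j}^{-2}\sim n^{-k/m}$ summed over $n>N$, not from a spectral-gap argument.

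Your account of the tower $\gamma_{n+1}=2^{\gamma_n+1}+\gamma_n+2$ is not the mechanism either. It does not come from ``re-expanding in the orthonormal basis''. To chain $k$ operators one needs $\hat L_p:H^{\gamma_j}(\rho)\to H^{\gamma_{j-1}}(\rho)$ with norm $O(1/a_p)$; Proposition~\ref{proposition/estimationnorme} gives $\|\hat L_p f\|_{H^r(\rho)}\le \theta_{\phi,r}\,a_p^{-1}\,\|f\|_{H^{2^{r+1}+r+2}(\rho)}$, and the loss $r\mapsto 2^{r+1}+r+2$ in Sobolev index is exactly the recursion defining $\gamma$. The exponential blow-up is produced by differentiating the rational coefficients of $\hat L_p(f)$: the $j$-th derivative has coefficients of the form $Q_{k,j,p}/A_p^{2^{j+1-k}}$ (Proposition~\ref{proposition/formedescoeff}), and bounding the numerators via the generalised strong Poincar\'e inequality (Proposition~\ref{proposition/poincaregeneralise}) costs derivatives proportional to their degree.
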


Let us make some comments on the result. In the definition of Freud weights, the second point forbids potentials which are non monotonic on $\mathbb{R}^{+}$. Thus, Theorem \ref{theoreme/mainresult} supplements the existing result when $\phi$ is a  polynomial. Our theorem \ref{theoreme/mainresult} requires high regularity on $f$. Hence, it fails to identify the  sequence $(\eta_{n})_{n\in\mathbb{N}}$ in (\ref{equation/eta}). It is anyway quantitative.  The convergence rate in Theorem \ref{theoreme/mainresult} depends linearly on $k$ but the regularity needed grow at least exponentially with $k$. It is a drawback and  our method of proof is far from being optimal. In our proof, the main obstacle is the form of the differential equations satisfied by the orthonormal polynomials as soon as $deg(\phi)>2$ (see Proposition \ref{proposition/formedescoeff}). The fact that the coefficients are rational fractions of polynomials with high degree is a huge difficulty.

\subsection{Outline of the paper}

In Section \ref{section/2} we state different results about orthonormal polynomials and  weighted Sobolev spaces. We start in Subsection \ref{subsection/2.1} by studying a family of differential equations satisfied by the orthonormal polynomials $(\tilde{P}_{n})_{n\in\mathbb{N}}$. The main issue is to find asymptotics for the polynomial coefficients of these equations. We rely heavily on Magnus's result on the asymptotics of the recursion coefficients (see \cite{MAGNUS198665}). Then, we prove in Subsection \ref{subsection/2.2} a generalization of the strong Poincaré inequality (\cite{herau}) for the weighted Sobolev spaces $H^{r}(\rho)$.

In Section \ref{section/3}, we give the proof of our main result. We start by proving the approximation result in the Hermite case in Subsection \ref{subsection/3.1}. While it has pedagogical interest, it gives insight on the proof of the general case which is detailed in Subsection \ref{subsection/3.2}.

%Here are the main steps of the proof. The differential equations studied in Section \ref{section/2} give differential operators  $\hat{L}_{p}$, for $p\in\mathbb{N}$, such that for all $k\geq 1 $, 

Central to our approach is to estimate the coefficients of a sufficiently smooth function $f\in L^{2}(\rho)$ in the basis  $(\tilde{P}_{n})_{n\in\mathbb{N}}$. A key identity is  

\begin{equation*}
\int_{\mathbb{R}} f \tilde{P}_{n} \rho dx = \int_{\mathbb{R}} \left( \bigcircop_{j=1}^{k} \hat{L}_{n+k-j+1} \right)(f) \tilde{P}_{n+k} \rho dx, \ n,k\in\mathbb{N}
\end{equation*}

where the differential operators $(\hat{L}_{p})_{p\in\mathbb{N}}$ steem from the differential equation verified by the orthonormal polynomials (\ref{equation/equadiff}) and $ \bigcircop_{j=1}^{k} \hat{L}_{n+k-j+1}$ is the composition of such operators, from the left to right. The cornerstone of the proof consists in estimating the operator norms by using the strong Poincaré inequality and an induction procedure.

In section 4, we first discuss the numerical computation of orthonormal polynomials. Then, we perform tests on smooth  and on less regular functions to confirm the main result.

\ \\
\textbf{Aknowledgments.} The author would like to thank Mehdi Badsi and Frédéric Hérau for their numerous comments on all the aspects of this work.

\section{Preliminary results}
\label{section/2}

\subsection{Properties of the orthonormal polynomials} 
\label{subsection/2.1}

In the following, we  use a family of differential equations satisfied by the sequence  $(\tilde{P}_{k})_{k\in\mathbb{N}}$. A proof was provided in \cite{YangChen}, and identify the polynomials $A_{n}$ and $B_{n}$ appearing in the next statement. We give the statement and its proof below.

\begin{proposition}
For all $n\in\mathbb{N}$, we have the following differential equation:

\begin{equation}
\left( \dfrac{d}{dx} + B_{n}(x) \right) \tilde{P}_{n}(x)  = A_{n}(x) \tilde{P}_{n-1}(x),
\label{equation/equadiff}
\end{equation}

where the functions $A_{n}$ and $B_{n}$ are polynomials given by:

$$
A_{n}(x) = a_{n} \int_{\mathbb{R}} \dfrac{\phi'(x)-\phi'(y)}{x-y} \tilde{P}_{n}^{2}(y) \rho(y) dy,
$$

$$
B_{n}(x) = a_{n} \int_{\mathbb{R}} \dfrac{\phi'(x)-\phi'(y)}{x-y} \tilde{P}_{n}(y) \tilde{P}_{n-1}(y)\rho(y) dy,
$$

and $a_{n}$ is given by (\ref{equation/a_n}).
The adjoint of $L_{n} :=  \dfrac{d}{dx} + B_{n}$ in $L^{2}(\rho)$ is given formally by:

$$
L_{n}^{*} :=  -\dfrac{d}{dx} + B_{n} + \phi'.
$$ 
\label{proposition/equadiff}

\end{proposition}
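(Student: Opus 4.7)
The plan is to derive the identity by expanding $\tilde{P}_n'$ in the orthonormal basis $(\tilde{P}_k)_{k}$ and then recognizing the combination $A_n(x)\tilde{P}_{n-1}(x) - B_n(x)\tilde{P}_n(x)$ via the Christoffel--Darboux formula.

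First, I would observe that $A_n$ and $B_n$ are genuinely polynomials in $x$: since $\phi'$ is a polynomial, the quotient $(\phi'(x)-\phi'(y))/(x-y)$ is a polynomial in the two variables $x$ and $y$, so integrating it against $\tilde{P}_n(y)^2\rho(y)$ (respectively $\tilde{P}_n(y)\tilde{P}_{n-1}(y)\rho(y)$) in $y$ produces a polynomial in $x$ of degree at most $\deg(\phi')-1=2m-2$.

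Next, writing $\tilde{P}_n'(x)=\sum_{k=0}^{n-1} c_{n,k}\tilde{P}_k(x)$ with $c_{n,k}=\int \tilde{P}_n'\tilde{P}_k\rho\,dx$, I would integrate by parts. Since $\rho'=-\phi'\rho$ and $\deg \tilde{P}_k'<n$, the boundary terms vanish (Gaussian-type decay of $\rho$) and $\int \tilde{P}_n\tilde{P}_k'\rho\,dx=0$ by orthogonality, so
\[
c_{n,k}=\int \phi'(x)\,\tilde{P}_n(x)\tilde{P}_k(x)\,\rho(x)\,dx,\qquad 0\le k\le n-1.
\]
Then I would compute $A_n(x)\tilde{P}_{n-1}(x)-B_n(x)\tilde{P}_n(x)$ from the integral formulas, factor out $\tilde{P}_n(y)$ in the integrand, and apply the Christoffel--Darboux identity
\[
\sum_{k=0}^{n-1}\tilde{P}_k(x)\tilde{P}_k(y)=a_n\,\frac{\tilde{P}_n(x)\tilde{P}_{n-1}(y)-\tilde{P}_{n-1}(x)\tilde{P}_n(y)}{x-y}
\]
to cancel the singular factor $1/(x-y)$. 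After expanding $\phi'(x)-\phi'(y)$, the piece proportional to $\phi'(x)\int\tilde{P}_n\tilde{P}_k\rho$ vanishes by orthogonality for $k<n$, and what remains is exactly $\sum_{k=0}^{n-1}c_{n,k}\tilde{P}_k(x)=\tilde{P}_n'(x)$. Rearranging yields the announced differential relation.

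Finally, the adjoint formula follows from a direct integration by parts: in $\langle L_n f,g\rangle_{L^2(\rho)}=\int(f'+B_n f)g\rho\,dx$, using $\rho'=-\phi'\rho$ to transfer the derivative from $f$ onto $g$ introduces the extra factor $\phi' g$, giving $L_n^*=-\tfrac{d}{dx}+B_n+\phi'$. The main obstacle in this scheme is the Christoffel--Darboux manipulation that turns the bilinear integral kernel $(\phi'(x)-\phi'(y))/(x-y)$ into the expansion of $\tilde{P}_n'$; the rest is careful bookkeeping with orthogonality and integration by parts.
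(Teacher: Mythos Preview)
Your proof is correct and follows essentially the same route as the paper: expand $\tilde{P}_n'$ in the orthonormal basis, compute the coefficients by integration by parts using $\rho'=-\phi'\rho$, and use the Christoffel--Darboux formula together with orthogonality to identify the result with $A_n\tilde{P}_{n-1}-B_n\tilde{P}_n$. The only cosmetic difference is direction---the paper starts from $\tilde{P}_n'$ and arrives at the integral formulas for $A_n,B_n$, whereas you start from those formulas and recover $\tilde{P}_n'$---but the ingredients and the logical content are identical.
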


\begin{proof}
The polynomial $\tilde{P}_{n}'$ is of degree $n-1$ so it can be projected on the orthonormal basis  $(\tilde{P}_{k})_{0\leq k\leq n-1}$ of $\mathbb{R}_{n-1}[X]$:

$$
\tilde{P}_{n}' = \sum_{k=0}^{n-1} c_{k,n} \tilde{P}_{k}.
$$

By an integration by parts, and since $\tilde{P}_{n} \perp \mathbb{R}_{n-1}[X]$:

\begin{eqnarray*}
c_{n,k} & = & \int_{\mathbb{R}} \tilde{P}_{n}'(y)\tilde{P}_{k}(y) \rho(y) dy \\
  		& = & - \int_{\mathbb{R}} \tilde{P}_{n}(y) (\tilde{P}_{k}'(y)-\phi'(y)\tilde{P}_{k}(y)) \rho(y) dy  \\
  		  		& = & \int_{\mathbb{R}} \tilde{P}_{n}(y) \phi'(y)\tilde{P}_{k}(y) \rho(y) dy. 
\end{eqnarray*}

Hence,

$$
\tilde{P}_{n}'(x) = \int_{\mathbb{R}} \left(\sum_{k=0}^{n-1} \tilde{P}_{k}(x) \tilde{P}_{k}(y) \right)  \tilde{P}_{n}(y) \phi'(y) \rho(y) dy.
$$

Notice that, since $\tilde{P}_{n} \perp \mathbb{R}_{n-1}[X]$, one has the following identity:

$$
 \int_{\mathbb{R}} \left(\sum_{k=0}^{n-1} \tilde{P}_{k}(x) \tilde{P}_{k}(y) \right)  \tilde{P}_{n}(y)  \rho(y) dy =  \sum_{k=0}^{n-1} \tilde{P}_{k}(x)\int_{\mathbb{R}} \tilde{P}_{k}(y) \tilde{P}_{n}(y)  \rho(y) dy = 0.
$$

Combining the two last identities yields

$$
\tilde{P}_{n}'(x) = \int_{\mathbb{R}} \left(\sum_{k=0}^{n-1} \tilde{P}_{k}(x) \tilde{P}_{k}(y) \right)  \tilde{P}_{n}(y) (\phi'(y)-\phi'(x)) \rho(y) dy.
$$

Finally, the Christoffel-Darboux formula (\cite{gautschi}, Theorem 1.32) implies that

$$
\sum_{k=0}^{n-1} \tilde{P}_{k}(x)\tilde{P}_{k}(y) = a_{n} \dfrac{\tilde{P}_{n-1}(y)\tilde{P}_{n}(x) -  \tilde{P}_{n-1}(x)\tilde{P}_{n}(y) }{x-y}.
$$

This establish the differential equation. 

Next, let $u,v$ be two smooth functions. By integration by part, 

\begin{eqnarray*}
\int_{\mathbb{R}} (L_{n}u) v \rho dx & = &  \int_{\mathbb{R}} B_{n} u  v \rho dx   +  \int_{\mathbb{R}} (\dfrac{d}{dx}u)  v \rho dx \\
									& = &  \int_{\mathbb{R}} B_{n} u  v \rho dx   - \int_{\mathbb{R}} u  (\dfrac{d}{dx}v - v \phi'   ) \rho dx \\
									& = &  \int_{\mathbb{R}} u (-\dfrac{d}{dx}v + v \phi' + B_{n} v)   \rho dx.
\end{eqnarray*} 

This gives formally the adjoint of $L_{n}$.
 
\end{proof}

It is instructive to explicit (\ref{equation/equadiff}) in the Hermite case. An easy computation gives that $A_{n} = a_{n}$, and that $B_{n}=0$. Since $a_{n}=\sqrt{n}$ for $n\geq 1$ and  $a_{0}=1$, we get

$$
\tilde{P}_{n}' = \sqrt{n} \tilde{P}_{n-1},
$$

$$
\tilde{P}_{0}' = 0.
$$

This equation is a classical result. The Hermite case is the simplest one for which computations are  explicit. Notice that in this case $A_{n}$ is positive for all $n\in\mathbb{N}$. In the sequel, we will need $A_{n}$ to be a positive polynomial. A trivial case in which all the $A_{n}$ are positive is when $\phi$ is a strictly convex potential. Indeed, the mean-value theorem applied to $\phi'$ shows that

$$
 \dfrac{\phi'(x)-\phi'(y)}{x-y} \tilde{P}_{n}^{2}(y) \rho(y)
$$

is nonnegative for all $x,y\in\mathbb{R}$. Moreover, this integrand does not vanish, so that $A_{n}(x)>0$ for every $x\in\mathbb{R}$ and $n\in\mathbb{N}$.

Recall that in this paper, we are interested in general potential, possibly nonconvex. To establish the positivity of $A_{n}$ in the general case, we have to investigate fine properties of the sequence $(\tilde{P}_{n})_{n\in\mathbb{N}}$. An essential property is the asymptotic behaviour of the coefficients $a_{n}$, established by Magnus in  Theorem 6.1 of \cite{MAGNUS198665}:

\begin{theoreme}[\cite{MAGNUS198665}]
Let $\phi$ a polynomial of even degree $2m$ and of leading coefficient $v_{m}>0$. Then

\begin{equation}
a_{n} \sim \left( \dfrac{  (m-1)!^{2}}{  2v_{m}(2m-1)!} n \right)^{1/2m}.
\end{equation}
\label{theoreme/magnus}
\end{theoreme}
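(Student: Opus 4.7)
The plan is to derive and exploit the Freud (or ``string'') equation, a nonlinear algebraic relation on the recurrence coefficients that arises from comparing two expressions for $\int \tilde{P}_n' \tilde{P}_{n-1}\rho\, dx$. Integrating by parts (using $\rho' = -\phi'\rho$) one gets $\int \tilde{P}_n' \tilde{P}_{n-1}\rho\, dx = \int \phi'\tilde{P}_n\tilde{P}_{n-1}\rho\, dx$, since $\tilde{P}_{n-1}' \in \mathbb{R}_{n-2}[X]$ is orthogonal to $\tilde{P}_n$. On the other hand, if $k_n$ denotes the leading coefficient of $\tilde{P}_n$, projecting $\tilde{P}_n'$ onto $\tilde{P}_{n-1}$ gives the coefficient $n k_n/k_{n-1} = n/a_n$ (using $k_{n+1}/k_n = 1/a_{n+1}$, which follows from the three-term recurrence). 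Equating these yields the key identity
$$a_n \int_{\mathbb{R}} \phi'(x)\tilde{P}_n(x)\tilde{P}_{n-1}(x)\rho(x)\, dx = n.$$

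Next I would expand $\phi'(x) = \sum_{p=1}^m 2pv_p x^{2p-1}$ and iterate the three-term recurrence to rewrite each $\int x^{2p-1}\tilde{P}_n\tilde{P}_{n-1}\rho\, dx$ as a homogeneous polynomial of degree $2p-1$ in the coefficients $\{a_{n+j}\}_{|j|\leq p}$. Each monomial corresponds to a lattice path of length $2p-1$ from $n-1$ to $n$ using $\pm 1$ steps; for the leading term $p=m$ there are $\binom{2m-1}{m-1}$ such paths, while the contributions of $p<m$ yield polynomials of total degree $2p-1<2m-1$. Under the ansatz $a_n = c\,n^{1/(2m)}(1+o(1))$ with the $a$'s asymptotically constant over windows of bounded length, the leading balance becomes
$$2m\,v_m\binom{2m-1}{m-1} c^{2m}\, n \;=\; n\,(1+o(1)),$$
giving $c^{2m} = \frac{(m-1)!^{\,2}}{2v_m(2m-1)!}$ via $\binom{2m-1}{m-1}=(2m-1)!/((m-1)!\,m!)$, which is exactly the claimed constant.

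The main obstacle is to justify the ansatz rigorously; the formal computation only identifies the candidate limit. This needs two ingredients: (i) a uniform two-sided bound $0<c_1\leq a_n/n^{1/(2m)}\leq c_2<\infty$, typically obtained by comparison with a reference pure Freud weight $e^{-v_m x^{2m}}$ together with control of the largest zero of $\tilde{P}_n$ and of $\|x\tilde{P}_n\|_{L^\infty}$; and (ii) a compactness and uniqueness argument showing that any accumulation point of the bounded sequence $(a_{n+j}/n^{1/(2m)})_{|j|\leq J}$, along a subsequence $n\to\infty$, is a constant vector $c_\star\mathbf{1}$ satisfying the algebraic equation obtained by passing to the limit in the Freud identity, so that the uniqueness of the positive real root forces $c_\star = c$ and hence the full convergence. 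Controlling the off-diagonal fluctuations of the $a_{n+j}$ is the technical heart of Magnus's argument and depends on ruling out oscillatory or drifting behaviour in the nonlinear recursion, which in turn relies crucially on the polynomial form of $\phi$ with positive leading coefficient.
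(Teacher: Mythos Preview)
The paper does not prove this theorem at all: it is quoted verbatim from Magnus's 1986 paper and used as a black box throughout. So there is no ``paper's own proof'' to compare against.

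That said, your sketch is a correct outline of the standard argument (and essentially of Magnus's). The derivation of the string equation $a_n\int\phi'\tilde P_n\tilde P_{n-1}\rho\,dx=n$ is right, the lattice-path expansion of $\int x^{2p-1}\tilde P_n\tilde P_{n-1}\rho\,dx$ is right, and the leading-balance computation correctly recovers the constant $(m-1)!^{2}/(2v_m(2m-1)!)$. You are also honest about where the real work lies: the a~priori two-sided bound $a_n\asymp n^{1/(2m)}$ and the exclusion of oscillation in the nonlinear recursion are the substantive steps, and what you wrote under (i)--(ii) is a fair summary of the strategy but not a proof. If you wanted to turn this into a self-contained argument you would need to supply those two ingredients in detail; as written, your proposal is a correct road map rather than a complete proof.
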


An other lemma of \cite{YangChen} will be used in the following.  For the sake of self-containdness, we remind its statement and proof below. 

\begin{lemme}[\cite{YangChen}]
Let $k,n\in\mathbb{N}$ with $n\geq k$.Then $y^{k} \tilde{P}_{n}(y)$ has the form

$$
y^{k} \tilde{P}_{n}(y) = \sum_{r=0}^{k} c_{n,k,r} \tilde{P}_{n+2r-k}(y),
$$

where for all $r\in \{0,...,k\}$,  $c_{n,k,r}$ is a homogenous polynomial of degree $k$ of the numbers  $a_{n-(k-1)+r},a_{n-(k-1)+r+1},...,a_{n+r}$ with positive coefficients.
\label{lemme/lemme1}
\end{lemme}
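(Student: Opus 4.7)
The plan is to prove the claim by induction on $k$, using the three-term recurrence (\ref{equation/recurrence_polynome_2}) as the engine. The base case $k=0$ is trivial with $c_{n,0,0}=1$, interpreted as an empty (degree $0$) homogeneous polynomial with positive coefficient; the case $k=1$ is exactly the recurrence $y\tilde P_n = a_{n+1}\tilde P_{n+1} + a_n \tilde P_{n-1}$, so $c_{n,1,0}=a_n$ and $c_{n,1,1}=a_{n+1}$, each homogeneous of degree $1$ in the prescribed window.

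For the inductive step, assume the expansion holds for some $k \geq 1$ and take $n \geq k+1$. Multiplying the decomposition of $y^k \tilde P_n$ by $y$ and applying the three-term recurrence to each $y\tilde P_{n+2r-k}$ gives
\begin{equation*}
y^{k+1}\tilde P_n = \sum_{r=0}^{k} c_{n,k,r}\bigl(a_{n+2r-k+1}\tilde P_{n+2r-k+1} + a_{n+2r-k}\tilde P_{n+2r-k-1}\bigr).
\end{equation*}
Grouping terms of the same index $n+2s-(k+1)$ by setting $s=r+1$ in the first summand and $s=r$ in the second, I obtain the recurrence
\begin{equation*}
c_{n,k+1,s} = c_{n,k,s-1}\,a_{n+2s-k-1} + c_{n,k,s}\,a_{n+2s-k}, \qquad 0\leq s\leq k+1,
\end{equation*}
with the convention $c_{n,k,-1}=c_{n,k,k+1}=0$.

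It remains to check that the two properties (homogeneity of degree $k+1$ and positivity of coefficients) are preserved. Positivity is automatic since the $a_j$ are positive by (\ref{equation/a_n}) and the induction hypothesis provides positive coefficients for $c_{n,k,s-1}$ and $c_{n,k,s}$. For the index window, the induction hypothesis places the variables of $c_{n,k,s-1}$ in $\{a_{n-k+s},\dots,a_{n+s-1}\}$ and those of $c_{n,k,s}$ in $\{a_{n-k+s+1},\dots,a_{n+s}\}$; a short verification shows that both $a_{n+2s-k-1}$ and $a_{n+2s-k}$ lie in the enlarged window $\{a_{n-k+s},\dots,a_{n+s}\}$ whenever $1\leq s\leq k$, and the boundary cases $s=0$ and $s=k+1$ only involve one of the two summands, so the window for $c_{n,k+1,s}$ is exactly the one required by the statement for index $k+1$, namely $\{a_{n-k+s},\dots,a_{n+s}\}$.

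I do not expect any genuine obstacle: the proof is an elementary bookkeeping induction driven entirely by the three-term recurrence, and the only thing to be careful about is the reindexing of windows and the treatment of the two boundary values of $s$, which is handled cleanly by the $c_{n,k,-1}=c_{n,k,k+1}=0$ convention.
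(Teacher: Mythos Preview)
Your proof is correct and follows essentially the same approach as the paper: induction on $k$ driven by the three-term recurrence (\ref{equation/recurrence_polynome_2}), yielding the same recursion $c_{n,k+1,s}=c_{n,k,s-1}a_{n+2s-k-1}+c_{n,k,s}a_{n+2s-k}$ with the boundary convention $c_{n,k,-1}=c_{n,k,k+1}=0$. If anything, your version is slightly more complete, since you explicitly verify that the index window $\{a_{n-k+s},\dots,a_{n+s}\}$ is preserved, a check the paper leaves implicit.
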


\begin{proof}
Let us work by induction on $k$. If $k=0$ the statement is trivial. Assume that it is true for a fixed integer $k$, with $k+1\leq n$. The induction relies on the induction relation (\ref{equation/recurrence_polynome_2}):

\begin{eqnarray*}
y^{k+1} \tilde{P}_{n}(y) & = & \sum_{r=0}^{k} c_{n,k,r} y\tilde{P}_{n+2r-k}(y) \\
					    & = &  \sum_{r=0}^{k} c_{n,k,r} (a_{n+2(r+1)-(k+1)}\tilde{P}_{n+2(r+1)-(k+1)}(y) +  a_{n+2r-k}\tilde{P}_{n+2r-(k+1)}(y) ) \\
					    & = &  \sum_{r=1}^{k+1} c_{n,k,r-1} a_{n+2r-(k+1)}\tilde{P}_{n+2r-(k+1)}(y) +  \sum_{r=0}^{k}  a_{n+2r-k} c_{n,k,r} \tilde{P}_{n+2r-(k+1)}(y)  \\
					    & = & c_{n,k,0} a_{n-k} \tilde{P}_{n-(k+1)}(y) + \sum_{r=1}^{k} (c_{n,k,r-1} a_{n+2r-(k+1)}  +   a_{n+2r-k} c_{n,k,r}   )\tilde{P}_{n+2r-(k+1)}(y) \\
					   & +& c_{n,k,k} a_{n+(k+1)} \tilde{P}_{n+(k+1)}(y). 
\end{eqnarray*}

We can now pose 

$$c_{n,k+1,r} = \left\lbrace
\begin{array}{ccc}
c_{n,k,0}a_{n-k} & \ & r=0, \\
c_{n,k,k} a_{n+(k+1)} & \ & r=k+1, \\
c_{n,k,r-1} a_{n+2r-(k+1)}  +   a_{n+2r-k} c_{n,k,r} & \ & 1\leq r \leq k .
\end{array}\right.
$$

Then, 

$$y^{k+1} \tilde{P}_{n}(y) = \sum_{r=0}^{k+1} c_{n,k+1,r} \tilde{P}_{n+2r-k-1}(y).$$

This proves the result.

\end{proof}

We are now ready to prove the positivity of the polynomials $A_{n}$, at least for $n$ large enough.

\begin{proposition}

For all $x\in\mathbb{R}$ and $n\in\mathbb{N}$, the polynomials $A_{n}$ and $B_{n}$ have the expressions 

$$
A_{n}(x) = a_{n}\sum_{l=0}^{m-1} A_{n,l} x^{2l} \ ; \ B_{n}(x) = a_{n}\sum_{l=0}^{m-2} B_{n,l} x^{2l+1}. 
$$

We have that the $A_{n,l}$, $0\leq l\leq m-1$ are positive for $n$  large enough. Moreover, we have the the asymptotic behaviours when $n$ goes to $+\infty$:

$$
A_{n,l} = O_{n}(a_{n}^{2m-2-2l}) \ ; \ |B_{n,l}| = O_{n}(a_{n}^{2m-3-2l}).
$$

Consequently, there exists $N_{0}\in\mathbb{N}$ such that for all $n\geq N_{0}$, the polynomials $A_{n}$ are positive, convex and reach their global minimum at $0$. Moreover:

$$
\left\| \dfrac{1}{A_{n}} \right\|_{L^{\infty}(\mathbb{R})} \leq \dfrac{1}{A_{n}(0)}.
$$

\label{proposition/bornefraction}
\end{proposition}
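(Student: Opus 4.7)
The plan is to make the integrands defining $A_n$ and $B_n$ explicit via the expansion of $\phi'(x)=\sum_{p=1}^{m}2pv_{p}x^{2p-1}$, exploit parity to obtain the claimed polynomial structure, and then control the resulting coefficients by combining Lemma \ref{lemme/lemme1} with the sharp asymptotic of Magnus (Theorem \ref{theoreme/magnus}).

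First I would expand
\[
\frac{\phi'(x)-\phi'(y)}{x-y}=\sum_{p=1}^{m}2pv_{p}\sum_{j=0}^{2p-2}x^{2p-2-j}y^{j}.
\]
Since $\tilde P_{n}$ has the parity of $x^{n}$ (because $\phi$ is even), the function $\tilde P_{n}^{2}\rho$ is even in $y$, so in the integral defining $A_{n}$ only the terms with $j$ even (hence $2p-2-j$ even) contribute. This forces the form $A_{n}(x)=a_{n}\sum_{l=0}^{m-1}A_{n,l}x^{2l}$ with
\[
A_{n,l}=\sum_{p=l+1}^{m}2pv_{p}\int_{\mathbb{R}}y^{2p-2-2l}\tilde P_{n}^{2}(y)\rho(y)\,dy.
\]
Since $\tilde P_{n}\tilde P_{n-1}\rho$ is odd in $y$, the same argument produces only odd powers of $x$ up to degree $2m-3$ in $B_{n}$, with coefficients
\[
B_{n,l}=\sum_{p=l+2}^{m}2pv_{p}\int_{\mathbb{R}}y^{2p-3-2l}\tilde P_{n}(y)\tilde P_{n-1}(y)\rho(y)\,dy.
\]

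Next I would apply Lemma \ref{lemme/lemme1}: expanding $y^{k}\tilde P_{n}=\sum_{r=0}^{k}c_{n,k,r}\tilde P_{n+2r-k}$ and using orthonormality immediately yields
\[
\int_{\mathbb{R}}y^{2p-2-2l}\tilde P_{n}^{2}\rho\,dy=c_{n,\,2p-2-2l,\,p-1-l},\qquad \int_{\mathbb{R}}y^{2p-3-2l}\tilde P_{n}\tilde P_{n-1}\rho\,dy=c_{n,\,2p-3-2l,\,p-2-l},
\]
provided $n$ is large enough (namely $n\geq 2m-2$) for the indices to lie in the admissible range. These coefficients $c_{n,k,r}$ are homogeneous polynomials of degree $k$ with positive coefficients in the recursion numbers $a_{n+j}$ for $j$ in a bounded range. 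Magnus's asymptotic $a_{n+j}\sim a_{n}$ as $n\to\infty$ then gives that the above moments are $O(a_{n}^{2p-2-2l})$ and $O(a_{n}^{2p-3-2l})$ respectively. The dominant contribution corresponds to $p=m$: in the case of $A_{n,l}$ this is the strictly positive term $2mv_{m}\,c_{n,\,2m-2-2l,\,m-1-l}$, since $v_{m}>0$ and $c_{\cdot,\cdot,\cdot}$ is a polynomial with positive coefficients evaluated at positive numbers. Every other summand ($p<m$) is of strictly smaller order in $n$, so there exists $N_{0}$ such that $A_{n,l}>0$ for all $l$ and $n\geq N_{0}$, and the claimed asymptotic bounds on $A_{n,l}$ and $|B_{n,l}|$ follow at once.

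Finally, the structural consequences for $A_{n}$ reduce to elementary facts about even polynomials with positive coefficients. Once each $A_{n,l}>0$, the polynomial $A_{n}$ is a sum of monomials $x^{2l}$ with positive coefficients, hence is even, strictly positive on $\mathbb{R}$, and strictly convex because $A_{n}''(x)\geq 2a_{n}A_{n,1}>0$. Differentiating shows $A_{n}'(x)>0$ for $x>0$, so $A_{n}$ is increasing on $[0,+\infty)$ and, by evenness, attains its global minimum at $0$; this immediately gives $\|1/A_{n}\|_{L^{\infty}}=1/A_{n}(0)$. The main obstacle is the bookkeeping in the second step: one must track simultaneously the index $l$, the recursion shift in the $a_{n+j}$'s, and the admissibility condition of Lemma \ref{lemme/lemme1}, in order to isolate the $p=m$ contribution uniformly in $l$ and verify it strictly dominates the $p<m$ terms.
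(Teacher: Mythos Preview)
Your proof is correct and follows essentially the same route as the paper: expand the divided difference of $\phi'$, use parity to isolate the even (resp.\ odd) powers in $A_n$ (resp.\ $B_n$), rewrite the resulting moments via Lemma~\ref{lemme/lemme1} as the coefficients $c_{n,k,r}$, and invoke Magnus's asymptotic to show the $p=m$ term dominates and is positive. The only cosmetic difference is that you argue parity directly from the parity of $\tilde P_n$ (hence of $\tilde P_n^2\rho$ and $\tilde P_n\tilde P_{n-1}\rho$), whereas the paper extracts it from the structure of Lemma~\ref{lemme/lemme1}; both are equivalent.
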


\begin{proof}

Let us write $A_{n}$ in the standard basis. By using remarkable identity  $$x^{p}-y^{p}=(x-~y)\sum_{k=0}^{p-1-k}x^{k}y^{p-1-k}$$ we get that, recalling equation \ref{equation/phi}:

$$
\dfrac{\phi'(x)-\phi'(y)}{x-y}\tilde{P}_{n}(y) = \sum_{p=1}^{m}2pv_{p} \sum_{k=0}^{2p-2}x^{2p-2-k} y^{k}\tilde{P}_{n}(y).
$$

Next, by mean of  Lemma \ref{lemme/lemme1}, this polynomial in the $y$-variable can be projected  on the basis $(\tilde{P}_{l})_{l\in\mathbb{N}}$. Let's assume that $2m-2\leq n$. Lemma \ref{lemme/lemme1} states that $y^{k} \tilde{P}_{n}$ has a  $\tilde{P}_{n}$ component only if $k$ is even,  and the corresponding coefficient is equal to $c_{n,k,k/2} = \int_{\mathbb{R}} y^{k} \tilde{P}_{n}(y)^{2}\rho(y)dy$.

Consequently,

\begin{eqnarray*}
A_{n}(x)  & = &  a_{n}\int_{\mathbb{R}}  \dfrac{\phi'(x)-\phi'(y)}{x-y}\tilde{P}_{n}(y)  \tilde{P}_{n}(y) \rho(y) dy  \\
		&= & a_{n}\int_{\mathbb{R}} \sum_{p=1}^{m}2pv_{p} \sum_{k=0, \ k \ even}^{2p-2}x^{2p-2-k} y^{k} \tilde{P}_{n}^{2}(y) \rho(y) dy. 
\end{eqnarray*}

 By performing a change of indices and rearanging the sums, we find that

\begin{eqnarray*}
A_{n}(x)  & = &   a_{n}\int_{\mathbb{R}}  \sum_{p=1}^{m}2pv_{p} \sum_{r=0}^{p-1}x^{2p-2-2r} y^{2r} \tilde{P}_{n}^{2}(y) \rho(y) dy \mbox{ \ \ ($k=2r$)}  \\
	 & = & a_{n}\int_{\mathbb{R}}  \sum_{p=1}^{m}2pv_{p} \sum_{l=0}^{p-1}x^{2l} y^{2p-2-2l} \tilde{P}_{n}^{2}(y) \rho(y) dy   \mbox{ \ \ ($l=p-1-r$)}  \\
	 & = & a_{n}\int_{\mathbb{R}}    \sum_{l=0}^{m-1} \sum_{p=1}^{m}2pv_{p} \delta_{\{l\leq p-1\}} y^{2p-2-2l}   x^{2l} \tilde{P}_{n}^{2}(y) \rho(y) dy. 
\end{eqnarray*}

where $\delta_{\{l\leq p-1\}}$ equals 1 if $l\leq p-1$, and 0 otherwise.

Define $Q_{l}(y) :=  \sum_{p=1}^{m}2pv_{p} \delta_{\{l\leq p-1\}} y^{2p-2-2l} $. Remark that $Q_{l}$ is an even polynomial of degree $2m-2-2l$ and of leading coefficient $2mv_{m}>0$. In the standard basis, $A_{n}$ reads then

$$
A_{n}(x) = a_{n}\left(2mv_{m} x^{2m-2} +  \sum_{l=0}^{m-2} \int_{\mathbb{R}} Q_{l}(y)   \tilde{P}_{n}^{2}(y) \rho(y) dy  x^{2l} \right).
$$

Now,  apply  Lemma \ref{lemme/lemme1} to express the integral  $ \int_{\mathbb{R}} Q_{l}(y)   \tilde{P}_{n}^{2}(y) \rho(y) dy $:

$$
\int_{\mathbb{R}} Q_{l}(y)   \tilde{P}_{n}^{2}(y) \rho(y) dy  = \sum_{p=1}^{m} 2pv_{p} \delta_{l\leq p-1} c_{n,2p-2-2l,p-1-l}.
$$

This expression depends on $n$. According to Lemma \ref{lemme/lemme1} and Theorem \ref{theoreme/magnus}, the dominant term is $c_{n,2m-2-2l,m-1-l}$, and the leading coefficient is  $2mv_{m}>0$. Hence, the last integral tends to  $+\infty$ when $n\to \infty$. Therefore, there exists $N_{0}$ such that all the coefficients of $A_{n}$ in the standard basis are  positive. Moreover, as $A_{n}$ is even, $A_{n}$ reaches its global minimum at 0 and is strictly convex. It implies the following inequality for all $x\in\mathbb{R}$ and $n\geq N_{0}$:

$$
0< \dfrac{1}{A_{n}(x)} \leq \dfrac{1}{A_{n}(0)}.
$$

The same process can be applied to  $B_{n}$. We find

$$
B_{n}(x) = a_{n} \sum_{l=0}^{m-2}   \sum_{p=1}^{m}2pv_{p} \delta_{\{l\leq p-2\}} c_{n,2p-2l-3,p-l-2}  x^{2l+1}.
$$

The coefficient in front of $x^{2l+1}$ in the standard basis tends toward $+\infty$ as $a_{n}^{2m-3-2l}$.

\end{proof}

\begin{remark}

All the preceeding computations can be done explicitely on the double well case $\phi(x) =  (x-1)^{2}(x+1)^{2} = x^{4}-2x^{2}+1$. We find that:

$$
A_{n}(x) = 4 a_{n}(x^{2} + a_{n}^{2}+a_{n+1}^{2}-1) \  ; \ \ B_{n}(x) = 4a_{n}^{2}x. 
$$

This shows that $A_{n}$ is positive as long as $a_{n}^{2}+a_{n+1}^{2}>1$.

\end{remark}
\subsection{Functional inequality}
\label{subsection/2.2}
Let us move on to the strong Poincaré inequality for the measure $\rho(x)dx$ proven in e.g. \cite{poincare}.

\begin{theoreme}[Strong Poincaré inequality \cite{poincare}]
There exists a constant $C_{p}>0$ such that for all $f\in H^{1}(\rho)$, 

$$
\int_{\mathbb{R}} (1+\phi^{'2}(x)) |f(x)-<f>|^{2} \rho(x) dx \leq C_{p} \int_{\mathbb{R}} f^{'2}(x) \rho(x) dx 
$$ 

where $<f> := \int_{\mathbb{R}} f \rho dx$ denotes the mean value of $f$ with respect to the measure $\rho(x)dx$.
\label{theoreme/poincare}
\end{theoreme}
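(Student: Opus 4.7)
The plan is to reduce to zero-mean functions by replacing $f$ with $g := f - \langle f \rangle$ and, using density of $C_c^\infty(\mathbb{R})$ in $H^1(\rho)$, to establish the inequality for smooth compactly supported $g$ (which kills all boundary terms in subsequent integrations by parts). The core idea is to exploit the fundamental identity $\phi'(x)\rho(x) = -\rho'(x)$ to pass from the weighted term $\int \phi'^2 g^2 \rho$ back to a gradient term involving $g'$.

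Concretely, I would first integrate by parts once to obtain
$$\int_{\mathbb{R}} \phi'^2 g^2 \rho\,dx \;=\; -\int_{\mathbb{R}} \phi'\, g^2\, \rho'\,dx \;=\; \int_{\mathbb{R}} \bigl(2gg'\phi' + g^2 \phi''\bigr)\rho\,dx.$$
The cross term $2gg'\phi'$ is handled by Young's inequality: writing $2|gg'\phi'|\rho \leq \delta (g\phi')^2\rho + \delta^{-1}(g')^2\rho$, a fraction $\delta$ of $\int \phi'^2 g^2 \rho$ can be absorbed into the left-hand side, yielding a first bound of the shape
$$\tfrac{1}{2}\int_{\mathbb{R}} \phi'^2 g^2 \rho\,dx \;\leq\; 2\int_{\mathbb{R}} (g')^2 \rho\,dx + \int_{\mathbb{R}} \phi''\, g^2 \rho\,dx.$$

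The remaining task is to dominate the $\phi''$ term. Here I would use a polynomial comparison: under (H), $\phi$ has degree $2m$ so $\phi'^2$ has degree $4m-2$ while $\phi''$ has degree only $2m-2$, hence for every $\varepsilon>0$ there is $C_\varepsilon>0$ with $|\phi''(x)| \leq C_\varepsilon + \varepsilon\,\phi'(x)^2$ on $\mathbb{R}$. A second absorption step then produces
$$\int_{\mathbb{R}} \phi'^2 g^2 \rho\,dx \;\leq\; C_1 \int_{\mathbb{R}} (g')^2 \rho\,dx + C_2 \int_{\mathbb{R}} g^2 \rho\,dx.$$
Adding $\int g^2 \rho\,dx$ to both sides and invoking the standard (weak) Poincaré inequality $\int g^2 \rho\,dx \leq C_p' \int (g')^2 \rho\,dx$ for zero-mean $g$ closes the argument and delivers the strong form.

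The main obstacle is precisely the weak Poincaré inequality in the last step: since $\phi$ is not assumed convex — the motivating example being the double well $\phi(x)=(x^2-1)^2$ — Bakry-Émery does not apply. To secure the spectral gap on zero-mean functions one must invoke either the Muckenhoupt criterion on $\mathbb{R}_+$ and $\mathbb{R}_-$ separately, or equivalently the compactness of the resolvent of the Witten Laplacian $-\partial_x^2 + \tfrac{1}{4}\phi'^2 - \tfrac{1}{2}\phi''$ on $L^2(\mathbb{R})$; both criteria are satisfied here because $\phi'(x)^2 \to +\infty$ as $|x|\to\infty$ under (H). Everything else is a controlled integration-by-parts/absorption game.
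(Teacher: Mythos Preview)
The paper does not give its own proof of this theorem: it is quoted from the reference \cite{poincare} (and also \cite{herau}) and used as a black box. So there is nothing in the paper to compare your argument against.

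That said, your argument is correct and is essentially the standard proof one finds in the literature on hypocoercivity and Witten Laplacians. The chain
\[
\int \phi'^{2} g^{2}\rho = \int (2gg'\phi' + \phi'' g^{2})\rho
\;\Rightarrow\;
\tfrac12\int \phi'^{2} g^{2}\rho \le 2\int (g')^{2}\rho + \int \phi'' g^{2}\rho
\;\Rightarrow\;
\int \phi'^{2} g^{2}\rho \le C_{1}\int (g')^{2}\rho + C_{2}\int g^{2}\rho
\]
using $|\phi''|\le C_{\varepsilon}+\varepsilon\phi'^{2}$ (valid since $\deg\phi''=2m-2<4m-2=\deg\phi'^{2}$ under \textbf{(H)}) is exactly the usual route. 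Your identification of the ordinary spectral-gap Poincar\'e inequality as the only genuinely nontrivial input is accurate, and your two proposed justifications (Muckenhoupt's criterion on each half-line, or compact resolvent of $-\partial_{x}^{2}+\tfrac14\phi'^{2}-\tfrac12\phi''$ via $\phi'^{2}(x)\to+\infty$) are both valid under \textbf{(H)}.

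One small technical point worth tightening: after setting $g=f-\langle f\rangle$ you cannot literally assume $g\in C_{c}^{\infty}(\mathbb{R})$, since a nonzero constant has full support. The clean fix is to approximate $f$ itself by $f_{n}\in C_{c}^{\infty}(\mathbb{R})$, prove the inequality for each $f_{n}$ (the integration by parts on $\phi' g_{n}^{2}\rho$ with $g_{n}=f_{n}-\langle f_{n}\rangle$ still has vanishing boundary terms because $\phi'\rho\to 0$ at infinity), and pass to the limit using $\langle f_{n}\rangle\to\langle f\rangle$. This is routine and does not affect the substance of your proof.
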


This inequality alone guarantees that if $f\in H^{1}(\rho)$, then the product $x^{k}f$ belong to $L^{2}(\rho)$, for $k\leq 2m-1$: regularity implies more integrability. We will generalize this inequality for more regular functions $f$. This will allow us to control  $L^{2}(\rho)$ norms of products between these functions and polynomials up to a certain degree.

\begin{proposition}
There exists a constant $C_{\phi,r}>0$ depending only on $\phi$ and $r\in\mathbb{N}$ such that for all $f\in H^{r}(\rho)$,

$$
\| (\phi^{'})^{r} f \|_{L^{2}(\rho )}^{2} \leq C_{\phi,r} \| f\|_{H^{r}(\rho )}^{2}.
$$
In particular, if $P\in\mathbb{R}_{r(2m-1)-1}[X]$, then there exists a constant $K_{\phi,r,P}>0$ depending only on $P,\phi,r$ such that

$$
\| P f\|_{L^{2}(\rho)}^{2} \leq K_{\phi,r,P} \| f \|_{H^{r}(\rho)}^{2}.
$$ 
\label{proposition/poincaregeneralise}
\end{proposition}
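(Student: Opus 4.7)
My plan is to proceed by induction on $r$, taking the strong Poincar\'e inequality (Theorem \ref{theoreme/poincare}) as the base case and propagating the estimate by applying it to the auxiliary function $g := (\phi^{'})^{r} f$. Once the first inequality is established, the second one follows from a pointwise domination: any polynomial of degree strictly less than $r(2m-1)$ is dominated by $1+|(\phi^{'})^{r}|$, since $(\phi^{'})^{r}$ has degree exactly $r(2m-1)$.

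First I would check the base cases. For $r = 0$ the bound is trivial. For $r=1$, I would write
$$
\| \phi' f \|_{L^{2}(\rho)}^{2} \leq 2 \| \phi'(f - <f>) \|_{L^{2}(\rho)}^{2} + 2 <f>^{2} \| \phi' \|_{L^{2}(\rho)}^{2},
$$
bound the first term by $2C_{p}\|f'\|_{L^{2}(\rho)}^{2}$ via Theorem \ref{theoreme/poincare}, and control the mean by Cauchy--Schwarz, $<f>^{2} \leq \|1\|_{L^{2}(\rho)}^{2}\|f\|_{L^{2}(\rho)}^{2}$. The constants $\|\phi'\|_{L^{2}(\rho)}$ and $\|1\|_{L^{2}(\rho)}$ are finite because $\rho$ decays faster than any polynomial under assumption \textbf{(H)}.

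For the inductive step, assume the first inequality at rank $r$ and take $f \in C_{c}^{\infty}(\mathbb{R})$ (dense in $H^{r+1}(\rho)$), so that $g := (\phi^{'})^{r} f$ is in $C_{c}^{\infty}(\mathbb{R})\subset H^{1}(\rho)$. Applying the strong Poincar\'e inequality to $g$ yields
$$
\| (\phi^{'})^{r+1} f \|_{L^{2}(\rho)}^{2} = \| \phi' g \|_{L^{2}(\rho)}^{2} \leq 2C_{p}\|g'\|_{L^{2}(\rho)}^{2} + 2 <g>^{2} \|\phi'\|_{L^{2}(\rho)}^{2}.
$$
Now expand $g' = r(\phi^{'})^{r-1}\phi'' f + (\phi^{'})^{r} f'$. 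The second piece is handled directly by the induction hypothesis applied to $f' \in H^{r}(\rho)$, giving $\|(\phi')^{r}f'\|_{L^{2}(\rho)}^{2} \leq C_{\phi,r}\|f\|_{H^{r+1}(\rho)}^{2}$. For the first piece, the polynomial $(\phi^{'})^{r-1}\phi''$ has degree $r(2m-1) - 1 < r(2m-1)$, so the pointwise bound $|(\phi^{'})^{r-1}\phi''|^{2} \leq c_{\phi}(1 + (\phi^{'})^{2r})$ reduces it to $\|(\phi^{'})^{r} f\|_{L^{2}(\rho)}^{2} + \|f\|_{L^{2}(\rho)}^{2}$, again bounded by the induction hypothesis at rank $r$. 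Finally, $<g>^{2} \leq \|(\phi^{'})^{r}\|_{L^{2}(\rho)}^{2} \|f\|_{L^{2}(\rho)}^{2}$ by Cauchy--Schwarz, since $(\phi^{'})^{r} \in L^{2}(\rho)$. Summing these contributions and extending to $H^{r+1}(\rho)$ by density produces the estimate at rank $r+1$ with a constant $C_{\phi,r+1}$ depending only on $\phi$ and $r$.

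The second assertion then follows immediately: if $P \in \mathbb{R}_{r(2m-1)-1}[X]$, then $\deg P \leq r(2m-1)-2$, so $|P(x)|^{2} \leq c_{P}(1 + (\phi'(x))^{2r})$ uniformly, and integrating against $f^{2}\rho$ and using the first part gives $\|Pf\|_{L^{2}(\rho)}^{2} \leq c_{P}(1 + C_{\phi,r})\|f\|_{H^{r}(\rho)}^{2}$. The step I anticipate as the most delicate is the bookkeeping in the induction, namely verifying that the polynomial factors arising in $g'$ all have degree strictly below $r(2m-1)$, so that the induction hypothesis at rank $r$ closes the argument at rank $r+1$ without any circular dependence.
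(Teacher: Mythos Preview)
Your proposal is correct and follows essentially the same route as the paper: induction on $r$ with the strong Poincar\'e inequality, applying it at the inductive step to $g=(\phi')^{r}f$, expanding $g' = r(\phi')^{r-1}\phi'' f + (\phi')^{r}f'$, and controlling the lower-order factor $r(\phi')^{r-1}\phi''$ by degree comparison against $(\phi')^{r}$ (the paper phrases this comparison as a split over $|x|\leq M$ versus $|x|>M$ rather than your global pointwise bound $|(\phi')^{r-1}\phi''|^{2}\leq c_{\phi}(1+(\phi')^{2r})$, but the idea is identical). One harmless slip: in the paper's convention $\mathbb{R}_{n}[X]$ denotes polynomials of degree at most $n$, so $P\in\mathbb{R}_{r(2m-1)-1}[X]$ gives $\deg P \leq r(2m-1)-1$ rather than $r(2m-1)-2$, which of course still suffices for your domination argument.
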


\begin{proof}
We proceed by induction on $r$.

\textbf{Base case :} We have first  to check the property for $r=1$. By the strong Poincaré inequality, if $f\in H^{1}(\rho)$,

\begin{eqnarray*}
\int_{\mathbb{R}} \phi^{'2} f^{2} \rho dx & = & \int_{\mathbb{R}} \phi^{'2} | f-<f>+<f>|^{2} \rho dx \\
 & \leq & 2\int_{\mathbb{R}} \phi^{'2} | f-<f>|^{2} \rho dx + 2\int_{\mathbb{R}} \phi^{'2} \rho dx  <f>^{2} \mbox{ (Young inequality)} \\
 & \leq &  2 c_{p} \| f' \|_{L^{2}(\rho)}^{2} + 2\int_{\mathbb{R}} \phi^{'2} \rho dx  \| f\|_{L^{2}(\rho)}^{2}  \mbox{ (Jensen and Strong Poincaré inequality)} \\
 & \leq & C_{\phi,1} \| f \|_{H^{1}(\rho)}^{2}
\end{eqnarray*}

with $C_{1,\phi}=2c_{p}+2\int_{\mathbb{R}} \phi^{'2} \rho dx $. This shows the result for $r=1$.
\ \\

\textbf{Induction step :} Let us assume that the property is true for a fixed integer $r\geq 1$. Let $f\in H^{r+1}(\rho)$. Trivially, $f\in H^{r}(\rho)$. By assumption, there exists $C_{\phi,r}$ such that 
$ \| \phi^{'r} f \|_{L^{2}(\rho)}^{2} \leq C_{\phi,r} \| f\|_{H^{r}(\rho)}^{2}$.

Since the function $f$ is in $L^{1}_{loc}$, $\phi^{'r}$ is smooth, the product $\phi^{'r}f$ defines a distribution and $(\phi^{'r}f)'=r\phi''\phi^{'r-1}f+\phi^{'r}f'$. This right-hand side is also a function which we prove to belong to $L^{2}(\rho)$ by showing that it is the sum of two functions in $L^{2}(\rho)$. By considerations on the degree of $\phi$ and its derivatives, 

$$
\left| \dfrac{r\phi''(x)(\phi^{'})^{r-1}(x)}{(\phi^{'})^{r}(x)} \right| \xrightarrow[|x|\to +\infty]{} 0,
$$

so there exists $M_{r,\phi}>0$ such that $|r\phi''(x)(\phi^{'})^{r}(x)|\leq |(\phi^{'})^{r}(x)|$ if $|x|\geq M_{r,\phi}$, and $|r\phi''(x)(\phi^{'})^{r}(x)|\leq \|r\phi''(\phi^{'})^{r}\|_{L^{\infty}([-M_{r,\phi},M_{r,\phi}])}$ otherwise. Therefore,

\begin{eqnarray*}
\int_{\mathbb{R}} |r\phi''(\phi^{'})^{r}|^{2} f^{2} \rho dx & \leq &   \|r\phi''(\phi^{'})^{r}\|_{L^{\infty}([-M_{r,\phi},M_{r,\phi}])}  \int_{|x|\leq M_{r,\phi}}f^{2} \rho dx +  \int_{|x|\geq M_{r,\phi}} |(\phi^{'})^{r}|^{2} f^{2} \rho dx \\
& \leq &   \|r\phi''(\phi^{'})^{r}\|_{L^{\infty}([-M_{r,\phi},M_{r,\phi}])}  \int_{\mathbb{R}}f^{2} \rho dx +  \int_{\mathbb{R}} |(\phi^{'})^{r}|^{2} f^{2} \rho dx \\
& \leq &  \|r\phi''(\phi^{'})^{r}\|_{L^{\infty}([-M_{r,\phi},M_{r,\phi}])} \| f\|_{L^{2}(\rho)}^{2} + C_{\phi,r} \| f\|_{H^{r}(\rho)}^{2}.
\end{eqnarray*}

The induction hypothesis was applied in the third line.

By definition, as $f\in H^{r+1}(\rho)$, $f'\in H^{r}(\rho)$. The induction hypothesis can be applied to $f'$ : $\| (\phi^{'})^{r} f' \|_{L^{2}(\rho)}^{2} \leq C_{\phi,r} \| f'\|_{H^{r}(\rho)}^{2}$.

We have proven that $((\phi^{'})^{r}f)'\in L^{2}(\rho)$, hence $(\phi^{'})^{r}f\in H^{1}(\rho)$. The strong Poincaré inequality applied to $(\phi^{'})^{r}f$, and the computations already done for the base case and the induction step give then

\begin{eqnarray*}
\| \phi^{'r+1}f \|_{L^{2}(\rho)}^{2}  & \leq & 2C_{p} \| ((\phi^{'})^{r}f)^{'}  \|_{L^{2}(\rho )}^{2}  + 2 C_{p} \int_{\mathbb{R}} \phi^{'2} \rho   \| (\phi')^{r} f\|_{L^{2}(\rho)}^{2} \\
  & \leq & 2C_{p} \|r\phi''(\phi^{'})^{r-1}f+(\phi^{'})^{r}f' \|_{L^{2}(\rho )}^{2}  + 2 c_{p} \int_{\mathbb{R}} \phi^{'2} \rho dx   \| (\phi')^{r} f\|_{L^{2}(\rho)}^{2} \\
    & \leq & 4C_{p} \|r\phi''(\phi^{'})^{r-1}f \|_{L^{2}(\rho)}^{2} + 4C_{p} \| (\phi^{'})^{r}f' \|_{L^{2}(\rho)}^{2}  + 2 C_{p} \int_{\mathbb{R}} \phi^{'2} \rho  dx \| (\phi')^{r} f\|_{L^{2}(\rho)}^{2} \\
     & \leq  & 4C_{p} \|r\phi''(\phi^{'})^{r}\|_{L^{\infty}([-M_{r,\phi},M_{r,\phi}])} \| f\|_{L^{2}(\rho)}^{2} + 4C_{p} C_{\phi,r} \| f\|_{H^{r}(\rho)}^{2}+ 4C_{p} \| (\phi^{'})^{r}f' \|_{L^{2}(\rho)}^{2}  \\
      & + &  2 C_{p} \int_{\mathbb{R}} \phi^{'2} \rho dx  \| (\phi')^{r} f\|_{L^{2}(\rho)}^{2} \\
      & \leq & C_{\phi,r+1} \| f\|_{H^{r+1}(\rho)},
\end{eqnarray*} 

where $C_{\phi,r+1}$ is a constant depending only on $\phi$ and $r+1$. Notice that we used the induction hypothesis applied to $f$ and $f'$ at the last line. This proves the property at rank $r+1$.\ \\

\textbf{Conclusion :} By induction, the property is true for all $r\geq 1$.\ \\

To show the last point of Proposition \ref{proposition/poincaregeneralise}, it suffices to notice that for any polynomials $P$ such that $deg(P)\leq deg(\phi^{'r})$, one has that there exists $K>0$ such that

$$
\left| \dfrac{P(x)}{(\phi^{'})^{r}(x)} \right| <K
$$

 for $|x|$ large enough. Hence, there exists $M_{P,\phi}>0$ such that $|P(x)|\leq K |(\phi^{'})^{r}(x)|$ if $|x|\geq M_{P,\phi}$, and $|P(x)|\leq \|P\|_{L^{\infty}([-M_{P,\phi},M_{P,\phi}])}$ otherwise. By the same process used earlier in the induction step, we get

$$
\| Pf\|_{L^{2}(\rho)}^{2} \leq K \| (\phi^{'})^{r}f \|_{L^{2}(\rho)}^{2} +  \|P\|_{L^{\infty}([-M_{P,\phi},M_{P,\phi}])}^{2} \| f \|_{L^{2}(\rho)}^{2}.
$$

By this last inequality and by the first part of the proposition, there exists  $K_{\phi,r,P}>0$ such that

$$
\| P f\|_{L^{2}(\rho )}^{2} \leq K_{\phi,r,P} \| f \|_{H^{r}(\rho )}^{2}.
$$ 
\end{proof}
\newpage

\section{Convergence of the projection error}
\label{section/3}
Throughout this section, $f$ is a function of $C^{\infty}_{c}(\mathbb{R})$.

\subsection{Hermite case}
\label{subsection/3.1}
We remind here the proof of the spectral convergence theorem for the Hermite polynomials. We have already seen that in this case, operators $L_{n}$ and $L_{n}^{*}$ defined in Proposition \ref{proposition/equadiff} are independant of $n$, as $B_{n}=0$. This motivates the simpler notation:

$$
Lf = f'\ ; \ L^{*}f = -f'+xf.
$$

This specificity leads to a  simpler and more efficient proof than in the general setting. We  need to know  the form of the iterated operator $(L^{*})^{k}$.

\begin{proposition}
For all $k\geq 1$, 

$$(L^{*})^{k}f = \sum_{j=0}^{k} Q_{k,k-j}(x) f^{(j)}$$

where $Q_{k,k-j}$ is a polynomial of degree $k-j$.

\end{proposition}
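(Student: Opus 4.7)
The plan is to prove this by induction on $k$. For the base case $k=1$, a direct computation gives $L^{*}f = -f' + xf$, so we may take $Q_{1,0}(x) = -1$ (of degree $0$) and $Q_{1,1}(x) = x$ (of degree $1$), matching the claim.

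For the induction step, I would apply $L^*$ to the assumed expression at rank $k$:
$$
(L^{*})^{k+1}f = L^{*}\Bigl(\sum_{j=0}^{k} Q_{k,k-j}(x) f^{(j)}\Bigr) = \sum_{j=0}^{k} \Bigl(-Q_{k,k-j}'\, f^{(j)} - Q_{k,k-j}\, f^{(j+1)} + x\, Q_{k,k-j}\, f^{(j)}\Bigr).
$$
After reindexing the middle sum $j \mapsto j-1$ and grouping by $f^{(j)}$, the coefficients are:
for $j=0$, $\ xQ_{k,k} - Q_{k,k}'$; for $1 \le j \le k$, $\ xQ_{k,k-j} - Q_{k,k-j}' - Q_{k,k-j+1}$; and for $j=k+1$, $\ -Q_{k,0}$. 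Using the induction hypothesis on degrees, each of these is seen to be a polynomial of degree at most $(k+1)-j$: $xQ_{k,k-j}$ contributes degree $k-j+1$, $Q_{k,k-j+1}$ also contributes degree $k-j+1$, and $Q_{k,k-j}'$ has degree $k-j-1$.

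The main obstacle is to rule out the potentially lethal cancellation between $xQ_{k,k-j}$ and $Q_{k,k-j+1}$, which could drop the degree below $(k+1)-j$ and break the induction. To handle this, I would strengthen the induction hypothesis by tracking the leading coefficient $c_{k,j}$ of $Q_{k,k-j}$. The boundary terms give $c_{k+1,0} = c_{k,0}$ and $c_{k+1,k+1} = -c_{k,k}$, and the general recursion
$$
c_{k+1,j} = c_{k,j} - c_{k,j-1}, \qquad 1 \le j \le k,
$$
together with the initial data $c_{1,0} = 1$, $c_{1,1} = -1$, yields by a signed Pascal-triangle argument
$$
c_{k,j} = (-1)^{j}\binom{k}{j},
$$
which is never zero. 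This confirms that each $Q_{k+1,(k+1)-j}$ has degree exactly $(k+1)-j$, closing the induction. (As a sanity check, applying the formula to the constant function $f \equiv 1$ kills every derivative term and recovers $(L^{*})^{k}\mathbf{1} = Q_{k,k}(x)$, which one recognizes as the $k$-th Hermite polynomial, monic of degree $k$ as predicted.)
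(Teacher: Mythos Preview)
Your proof is correct and follows the same inductive scheme as the paper: apply $L^*$ to the rank-$k$ expression, reindex, and read off the new coefficients. You go further than the paper by explicitly tracking the leading coefficients $c_{k,j}=(-1)^j\binom{k}{j}$ to rule out the potential cancellation between $xQ_{k,k-j}$ and $-Q_{k,k-j+1}$; the paper's own argument simply asserts the degrees without addressing this, so your version is in fact more complete on the exact-degree claim.
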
 

\begin{proof}
Let us reason by induction.\ \\

\textbf{Base case :} The proposition is immediate for $k=1$.\ \\

\textbf{Induction step :} Assume that the proposition is true for a fixed $k\geq 1$. Then

\begin{eqnarray*}
(L^{*})^{k+1}f & = &  -\sum_{j=0}^{k} (Q_{k,k-j}(x) f^{(j)})'+ \sum_{j=0}^{k} xQ_{k,k-j}(x) f^{(j)}  \\
		  & = &  -\sum_{j=0}^{k} Q_{k,k-j}'(x) f^{(j)} +  Q_{k,k-j}(x) f^{(j+1)} + \sum_{j=0}^{k} xQ_{k,k-j}(x) f^{(j)} \\
		  & = &   -\sum_{j=0}^{k} Q_{k,k-j}'(x) f^{(j)} -  \sum_{j=1}^{k+1}  Q_{k,k-j+1}(x) f^{(j)} + \sum_{j=0}^{k} xQ_{k,k-j}(x) f^{(j)}\\
		  & = & (-Q_{k,k}'(x)+xQ_{k,k}(x)) f +   \sum_{j=1}^{k} (- Q_{k,k-j+1}(x)-Q_{k,k-j}'(x)+xQ_{k,k-j}(x)) f^{(j)} \\
		  &  - & Q_{k,0}(x) f^{(k+1)}. 
\end{eqnarray*}

The first polynomial is of degree $k+1$, and the last is of degree  $0$. The polynomial in front of $f^{(j)}$ is of degree $k+1-j$. Hence the property is true for $k+1$

\ \\
\textbf{Conclusion :} By induction, the property is true for all $k\geq 1$.

\end{proof}

We can now prove Theorem \ref{theoreme/mainresult} in the Hermite case.

\begin{proposition}
Let $\rho(x) = \dfrac{e^{-\frac{x^{2}}{2}}}{\sqrt{2\pi}}$ be the weight in the Hermite case and $k\geq 1$.
Then there exists a constant $\Lambda_{k}>0$ depending only on $k$, such that for all $f\in H^{k}(\rho)$,

$$
 \forall N\geq 1, \ \ \| f - \pi_{V_{N}}f \|_{L^{2}(\rho)} \leq \Lambda_{k} \| f\|_{H^{k}(\rho)} \dfrac{1}{N^{\frac{k}{2}}}
$$
\label{proposition/hermitecase}
\end{proposition}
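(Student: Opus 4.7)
The plan is to iterate a Fourier-coefficient identity derived from $L\tilde P_n=\sqrt{n}\,\tilde P_{n-1}$ and the adjointness $\int (Lf)g\,\rho\,dx=\int fL^{*}g\,\rho\,dx$. Substituting $\tilde P_n=L\tilde P_{n+1}/\sqrt{n+1}$ and integrating by parts yields the one-step identity
$$
\int_{\mathbb{R}} f\,\tilde P_n\,\rho\,dx=\frac{1}{\sqrt{n+1}}\int_{\mathbb{R}} L^{*}f\,\tilde P_{n+1}\,\rho\,dx.
$$
Iterating this $k$ times, or equivalently invoking the preceding proposition on the structure of $(L^{*})^{k}$, produces the key identity
$$
\int_{\mathbb{R}} f\,\tilde P_n\,\rho\,dx=\frac{1}{\sqrt{(n+1)(n+2)\cdots(n+k)}}\int_{\mathbb{R}}(L^{*})^{k}f\,\tilde P_{n+k}\,\rho\,dx,
$$
valid for every $n\in\mathbb{N}$.

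I would then square this identity, sum over $n>N$, and use the uniform lower bound $(n+1)(n+2)\cdots(n+k)\geq N^{k}$ in that range to obtain
$$
\|f-\pi_{V_N}f\|_{L^{2}(\rho)}^{2}\leq\frac{1}{N^{k}}\sum_{n>N}\left|\int_{\mathbb{R}}(L^{*})^{k}f\,\tilde P_{n+k}\,\rho\,dx\right|^{2}\leq\frac{1}{N^{k}}\,\|(L^{*})^{k}f\|_{L^{2}(\rho)}^{2},
$$
the final inequality being Parseval applied to $(L^{*})^{k}f$ in the basis $(\tilde P_{m})_{m\in\mathbb{N}}$. This Parseval step is the subtle point: bounding each Fourier coefficient of $(L^{*})^{k}f$ individually via Cauchy--Schwarz would only deliver the weaker rate $N^{-(k-1)/2}$, whereas summing first and then using Parseval recovers the full $N^{-k}$ decay for the squared projection error.

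To conclude I would control $\|(L^{*})^{k}f\|_{L^{2}(\rho)}$ by $\|f\|_{H^{k}(\rho)}$. By the preceding proposition $(L^{*})^{k}f=\sum_{j=0}^{k}Q_{k,k-j}(x)\,f^{(j)}(x)$ with $\deg Q_{k,k-j}=k-j$. Since in the Hermite case $\phi'(x)=x$, the pointwise bound $|Q_{k,k-j}(x)|\leq C(1+|x|^{k-j})$ combined with Proposition \ref{proposition/poincaregeneralise} applied to each $f^{(j)}$ gives $\|Q_{k,k-j}\,f^{(j)}\|_{L^{2}(\rho)}^{2}\leq C\,\|f^{(j)}\|_{H^{k-j}(\rho)}^{2}\leq C\,\|f\|_{H^{k}(\rho)}^{2}$. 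A triangle inequality then yields $\|(L^{*})^{k}f\|_{L^{2}(\rho)}^{2}\leq C_{k}\,\|f\|_{H^{k}(\rho)}^{2}$, and combining with the previous display finishes the proof with $\Lambda_{k}=\sqrt{C_{k}}$; density of $C_{c}^{\infty}(\mathbb{R})$ in $H^{k}(\rho)$ extends the bound to every admissible $f$. The main obstacle is the Parseval grouping described above, which is what produces the sharp $N^{-k/2}$ rate; the remaining estimates are mechanical consequences of the generalized Poincar\'e inequality already established.
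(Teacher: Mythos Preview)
Your proof is correct and follows essentially the same approach as the paper: the key identity for the Fourier coefficients via iterated integration by parts, the Parseval bound, and the control of $(L^{*})^{k}f$ through the structural proposition combined with the generalized Poincar\'e inequality. The only cosmetic difference is that you apply Parseval directly to $(L^{*})^{k}f$ and then expand via the triangle inequality, whereas the paper first expands $(L^{*})^{k}f=\sum_{j}Q_{k,k-j}f^{(j)}$, applies Cauchy--Schwarz in $\mathbb{R}^{k+1}$, swaps sums, and then applies Parseval term by term; these two orderings yield the same bound up to the value of the constant.
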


\begin{proof}
The core of the proof is the following equality : for all $k\geq 1$, 
$$
\tilde{P}_{n} =  \left(\prod_{j=1}^{k} \dfrac{1}{\sqrt{n+j}} \right)  L^{k}(\tilde{P}_{n+k}).
$$

By $k$ integrations by part, we find that

$$
\int_{\mathbb{R}} f \tilde{P}_{n} \rho dx =   \left(\prod_{j=1}^{k} \dfrac{1}{\sqrt{n+j}} \right) \int_{\mathbb{R}} f   L^{k}(\tilde{P}_{n+k}) \rho dx = \left(\prod_{j=1}^{k} \dfrac{1}{\sqrt{n+j}} \right) \int_{\mathbb{R}} L^{*k}(f)  \tilde{P}_{n+k} \rho dx.
$$

By the Parseval formula, we then compute

\begin{eqnarray*}
\| f - \pi_{V_{N}}f \|_{L^{2}(\rho)}^{2} & = & \sum_{n=N+1}^{\infty} \left|\int_{\mathbb{R}} f \tilde{P}_{n} \rho dx \right|^{2} \\
 & = &  \sum_{n=N+1}^{\infty} \left(\prod_{j=1}^{k} \dfrac{1}{n+j} \right) \left|\int_{\mathbb{R}} L^{*k}(f)  \tilde{P}_{n+k} \rho dx \right|^{2} \\
 & \leq &  \dfrac{1}{N^{k}} \sum_{n=N+1}^{\infty}   k^{2}\sum_{j=0}^{k}\left|\int_{\mathbb{R}} Q_{k,k-j}(x) f^{(j)} \tilde{P}_{n+k} \rho dx \right|^{2} \mbox{(By Cauchy-Schwarz on $\mathbb{R}^{k+1}$)} \\
 & \leq &  \dfrac{(k+1)^{2}}{N^{k}} \sum_{j=0}^{k}\sum_{n=N+1}^{\infty}   \left|\int_{\mathbb{R}} Q_{k,k-j}(x) f^{(j)} \tilde{P}_{n+k} \rho dx \right|^{2} \\
  & \leq &  \dfrac{(k+1)^{2}}{N^{k}} \sum_{j=0}^{k} \| Q_{k,k-j} f^{(j)} \|_{L^{2}(\rho)}^{2} \mbox{ (By Parseval formula)}\\
  & \leq &  \dfrac{(k+1)^{2}}{N^{k}} \sum_{j=0}^{k} C_{k,j} \|  f^{(j)} \|_{H^{k-j}(\rho)}^{2} \mbox{ (Last point of Proposition \ref{proposition/poincaregeneralise})} \\
  & \leq &  \dfrac{1}{N^{k}}\left( (k+1)^{2}\sum_{j=0}^{k} C_{k,j} \right) \|  f \|_{H^{k}(\rho)}^{2}.
\end{eqnarray*}

Here, $C_{k,j}$ is the best constant in the inequality $\| Q_{k,k-j} f^{(j)} \|_{L^{2}(\rho)}^{2} \leq C_{k,j} \|  f^{(j)} \|_{H^{k-j}(\rho)}^{2}$, given by the last point in Proposition \ref{proposition/poincaregeneralise}. This concludes the proof.
\end{proof}

\subsection{General case}
\label{subsection/3.2}
 We use first the differential equation (\ref{equation/equadiff}) to express $\tilde{P}_{n}$. If $n$ is large enough, $A_{n+1}$ is a positive polynomial. Therefore, we can write

$$
\tilde{P}_{n} = \dfrac{L_{n+1}(\tilde{P}_{n+1})}{A_{n+1}}.
$$

An integration by part in the coefficients of  $f$ in the basis $(\tilde{P}_{l})_{l\in\mathbb{N}}$ gives

$$
\int_{\mathbb{R}} f \tilde{P}_{n} \rho dx = \int_{\mathbb{R}} f  \dfrac{L_{n+1}(\tilde{P}_{n+1})}{A_{n+1}} \rho dx = \int_{\mathbb{R}} L^{*}_{n+1}\left(\dfrac{f}{A_{n+1}}\right) \tilde{P}_{n+1} \rho dx.
$$

In fact, the computation can be iterated and we get for $k\geq 1$, 

\begin{equation}
\int_{\mathbb{R}} f \tilde{P}_{n} \rho dx = \int_{\mathbb{R}} \left( \bigcircop_{j=1}^{k} \hat{L}_{n+k-j+1} \right)(f) \tilde{P}_{n+k} \rho dx 
\label{equation/10}
\end{equation}

where $\hat{L}_{p}$ is a linear differential operator defined by

\begin{equation}
\hat{L}_{p} := L^{*}_{p}\left(\dfrac{.}{A_{p}}\right).
\end{equation}

and

\begin{equation}
\bigcircop_{j=1}^{k} \hat{L}_{n+k-j+1} = \hat{L}_{n+k} \circ \hat{L}_{n+k-1}\circ ...\circ \hat{L}_{n+1}.
\end{equation}

The fact that the operators depends now on $n$  will cause us trouble  to generalize the proof for the Hermite case. We will use a different method instead, at the price of more regularity.
The first step toward the result is to understand the form of the derivative $(\hat{L}_{p}(f))^{(j)}$.

\begin{proposition}
Let $p$ be an integer large enough so that $A_{p}$ is positive. For all $j\in\mathbb{N}$, the derivative $ \hat{L}_{p}(f)^{(j)}$  has the following form:
 \[ \hat{L}_{p}(f)^{(j)}(x) = \sum_{k=0}^{j+1} \dfrac{Q_{k,j,p}(x)}{A_{p}^{2^{j+1-k}}(x)} f^{(k)}(x) .\]

The polynomials $Q_{k,j,p}$ satisify the three properties:

\begin{enumerate}
\item[1)] $Q_{j+1,j,p}=-1$;
\item[2)] $Q_{k,j,p}$ are polynomials of degree at most  $2^{j+1-k}(2m-2)+1$;
 \item[3)]  $Q_{k,j,p}(x) = a_{p}^{2^{j+1-k}(2m-1)-1} \tilde{Q}_{k,j,p}(x)$ where $\tilde{Q}_{k,j,p}$ is a polynomial whose coefficients are bounded with respect to $p$.
\end{enumerate}
\label{proposition/formedescoeff}
\end{proposition}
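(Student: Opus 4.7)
The proof goes by induction on $j$. The inductive step is obtained by differentiating the formula for $\hat{L}_p(f)^{(j)}$ once more and regrouping the resulting terms so that the coefficient of each $f^{(k)}$ sits over the common denominator $A_p^{2^{j+2-k}}$; the doubling of the exponent originates from the identity $(P/A_p^s)' = P'/A_p^s - sPA_p'/A_p^{s+1}$, combined with multiplication of numerators by $A_p^s$ when clearing to a common denominator.

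For the base case $j=0$, I would expand $\hat{L}_p(f) = L^*_p(f/A_p) = -(f/A_p)' + (B_p+\phi')(f/A_p)$ into
$$\hat{L}_p(f) = -\frac{1}{A_p}f' + \frac{A_p' + (B_p+\phi')A_p}{A_p^2}f,$$
yielding $Q_{1,0,p}=-1$ and $Q_{0,0,p}=A_p'+(B_p+\phi')A_p$. Property (1) is immediate; property (2) uses $\deg A_p = 2m-2$, $\deg B_p = 2m-3$ and $\deg \phi' = 2m-1$; property (3) follows from Proposition \ref{proposition/bornefraction}, which implies that the largest coefficient (in $x$) of $A_p$, $A_p'$, $B_p$ is $O(a_p^{2m-1})$, so that of $(B_p+\phi')A_p$ is $O(a_p^{4m-3}) = O(a_p^{2(2m-1)-1})$.

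For the induction step, differentiating the formula at rank $j$ yields
$$\hat{L}_p(f)^{(j+1)} = \sum_{k=0}^{j+1}\left(\frac{Q_{k,j,p}'}{A_p^{2^{j+1-k}}} - \frac{2^{j+1-k}Q_{k,j,p}A_p'}{A_p^{2^{j+1-k}+1}}\right)f^{(k)} + \sum_{k=0}^{j+1}\frac{Q_{k,j,p}}{A_p^{2^{j+1-k}}}f^{(k+1)},$$
and a shift $k\mapsto k-1$ in the last sum followed by multiplication to the common denominator $A_p^{2^{j+2-k}}$ identifies the recursion
$$Q_{k,j+1,p} = Q_{k-1,j,p} + Q_{k,j,p}'A_p^{2^{j+1-k}} - 2^{j+1-k}Q_{k,j,p}A_p'A_p^{2^{j+1-k}-1}$$
for $1\leq k\leq j+1$, together with $Q_{0,j+1,p} = Q_{0,j,p}'A_p^{2^{j+1}} - 2^{j+1}Q_{0,j,p}A_p'A_p^{2^{j+1}-1}$ and $Q_{j+2,j+1,p} = -1$, which immediately gives (1).

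Property (2) then follows mechanically from the induction hypothesis, $2\cdot 2^{j+1-k}(2m-2) = 2^{j+2-k}(2m-2)$, and the degrees of $A_p$ and $A_p'$. For (3), I would write $A_p = a_p^{2m-1}\tilde{A}_p$ with $\tilde{A}_p$ having $p$-uniformly bounded coefficients (a direct consequence of Proposition \ref{proposition/bornefraction}), substitute the factorizations $Q_{\ast,j,p} = a_p^{2^{j+1-\ast}(2m-1)-1}\tilde{Q}_{\ast,j,p}$ from the induction hypothesis into each of the three summands of the recursion, and verify that every summand carries the common factor $a_p^{2^{j+2-k}(2m-1)-1}$ multiplied by a polynomial of bounded degree and uniformly bounded coefficients. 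I expect the main obstacle to be this final bookkeeping: one must not attempt to control $A_p^s$ through its \emph{leading} coefficient in $x$ (which is only of order $a_p$), but rather through its \emph{largest} coefficient (attained at the constant term and of order $a_p^{2m-1}$), and the doubling exponents $2^{j+1-k}$ must be tracked carefully through the recursion — this doubling at each step is exactly what later forces the doubly-exponential regularity threshold $\gamma_k$ in the main theorem.
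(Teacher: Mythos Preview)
Your proof is correct and follows the same induction scheme as the paper's: the same base-case expansion of $\hat{L}_p(f)$, the same recursion $Q_{k,j+1,p} = Q_{k-1,j,p} + Q_{k,j,p}'A_p^{2^{j+1-k}} - 2^{j+1-k}Q_{k,j,p}A_p'A_p^{2^{j+1-k}-1}$ obtained by differentiating and clearing to the common denominator $A_p^{2^{j+2-k}}$, and the same verification of (1)--(3) via Proposition~\ref{proposition/bornefraction}. One small slip to fix in your base case for property~(3): you state that the coefficients of $B_p$ are $O(a_p^{2m-1})$ and then conclude that those of $(B_p+\phi')A_p$ are $O(a_p^{4m-3})$, but the arithmetic gives $O(a_p^{4m-2})$ from that premise; you need the sharper bound $B_p = a_p^{2m-2}\tilde{B}_p$ (which is what Proposition~\ref{proposition/bornefraction} actually yields, since $|B_{p,l}|=O(a_p^{2m-3-2l})$) to reach $O(a_p^{4m-3})$ --- and this exponent is not slack, since it is precisely what produces the crucial $1/a_p$ factor in Proposition~\ref{proposition/estimation_derivee}.
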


\begin{proof}
Let us work by induction on $j$. 

\textbf{Base case :} We check the different points for $j=0$. Remember that the polynomials $A_{p}$ and $B_{p}$, which appear in the differential equation (\ref{equation/equadiff}), are studied in proposition \ref{proposition/bornefraction}. For $j=0$, we have that:

$$
\hat{L}_{p}(f) = - \dfrac{1}{A_{p}} f' +  \dfrac{B_{p}A_{p}+\phi'A_{p}+A'_{p}}{A_{p}^{2}} f  .
$$ 

We now prove the three points of Proposition \ref{proposition/formedescoeff} using this expression.

\begin{enumerate}
\item[1)] It is straighforward.

\item[2)] By the definitions of $A_{p}$, $B_{p}$, the polynomial $B_{p}A_{p}+\phi'A_{p}+A'_{p}$ is of degree less than $4m-3$. 

\item[3)] According to  Proposition (\ref{proposition/bornefraction}) and Theorem (\ref{theoreme/magnus}), we have $A_{p}(x) = a_{p}^{2m-1} \tilde{A}_{p}(x)$ and $B_{p}(x) = a_{p}^{2m-2} \tilde{B}_{p}(x)$  with $\tilde{A}_{p}$ and  $\tilde{B}_{p}$  polynomials  whose coefficients are bounded with respect to $p$. Hence the third point is proven.
\end{enumerate}

This conclude the base case.\ \\

\textbf{Induction step :} Let us assume that the points 1, 2  and 3 are true for a fixed $j\in\mathbb{N}$.

By computing the derivative, we get that

\begin{eqnarray*}
(\hat{L}_{p}(f))^{j+1} & = &  \dfrac{Q_{0,j,p}' A_{p}^{2^{j+1}}-2^{j+1}A_{p}'A_{p}^{2^{j+1}-1}Q_{0,j,p}}{A_{p}^{2^{j+2}}} f  \\
& + & \sum_{k=1}^{j+1}  \dfrac{Q_{k,j,p}' A_{p}^{2^{j+1-k}}-2^{j+1-k}A_{p}'A_{p}^{2^{j+1-k}-1}Q_{k,j,p} + Q_{k-1,j,p}}{A_{p}^{2^{j+2-k}}}      f^{(k)} \\
& + & \dfrac{Q_{j+1,j,p}}{A_{p}}   f^{(j+2)}.
\end{eqnarray*}

\begin{enumerate}
\item[1)] The point 1 is readily checked by using the induction assumption.

\item[2)] By the induction hypothesis and  the study of $B_{p}$ and $A_{p}$, we have that

$$
deg(Q_{k,j,p} ) \leq 2^{j+1-k}(2m-2)+1,
$$
$$
deg(Q_{k,j,p}' ) \leq 2^{j+1-k}(2m-2),
$$
$$
deg(Q_{k-1,j,p} ) \leq 2^{j+2-k}(2m-2)+1,
$$
$$
deg(A_{p}^{2^{j+1-k}}) = (2m-2)2^{j+1-k},
$$
$$
deg(A_{p}^{2^{j+1-k}-1}) = (2m-2)(2^{j+1-k}-1),
$$
$$
deg(A_{p}^{'}) = 2m-3.
$$

Therefore,

$$
deg(Q'_{k,j,p} A_{p}^{2^{j+1-k}} )\leq  2^{j+2-k}(2m-2) ,
$$

$$
deg( A_{p}^{'}A_{p}^{2^{j+1-k}-1} Q_{k,j,p}) \leq    (2m-2)2^{j+2-k} .
$$

The second point is thus true.

\item[3)] According to the induction assumption and  Proposition \ref{proposition/bornefraction}, we have the following equalities (where $\tilde{.}$ denotes a polynomial whose coefficients are bounded with respect to $p$): 

$$
Q_{k,j,p} = a_{p}^{2^{j+1-k}(2m-1)-1} \tilde{Q}_{k,j,p},
$$
$$
Q_{k-1,j,p} = a_{p}^{2^{j+2-k}(2m-1)-1} \tilde{Q}_{k-1,j,p},
$$
$$
Q'_{k,j,p} = a_{p}^{2^{j+1-k}(2m-1)-1} \tilde{Q'}_{k,j,p},
$$
$$
A_{p}^{2^{j+1-k}} = a_{p}^{(2m-1)2^{j+1-k}} \tilde{A}_{p}^{2^{j+1-k}},
$$
$$
A_{p}^{'} = a_{p}^{2m-3} \tilde{A'}_{p},
$$
$$
A_{p}^{2^{j+1-k}-1} = a_{p}^{(2m-1)(2^{j+1-k}-1)} \tilde{A}_{p}^{2^{j+1-k}-1} .
$$

Therefore,

$$
Q'_{k,j,p} A_{p}^{2^{j+1-k}} = a_{p}^{2^{j+2-k}(2m-1)-1}   \tilde{Q'}_{k,j,p}\tilde{A}_{p}^{2^{j+1-k}},
$$

$$
A_{p}^{'}A_{p}^{2^{j+1-k}-1} Q_{k,j,p} = a_{p}^{2^{j+2-k}(2m-1)-3} \tilde{A'}_{p} \tilde{Q}_{k,j,p}  \tilde{A}_{p}^{2^{j+1-k}-1} .
$$

These expressions satisfy the third point.\ \\

\end{enumerate}
\textbf{Conclusion : } By induction, the points 1, 2 and 3 are true for all $j\in\mathbb{N}$.

\end{proof}

\begin{remark}
The general case is very different of the Hermite case. Indeed, for the Hermite case, the coefficients are polynomials whose degrees grow linearly with the degree of derivation. In the general setting, the coefficients are rational fractions and the degree of their numerators grows exponentially with the degree of derivation. We think that the bottelneck of the proof is to study the fine properties of these rational fractions.
\end{remark}
Now that we have informations on $(\hat{L}_{p}(f))^{(j)}$, we are able to bound its $L^{2}(\rho)$ norm.

\begin{proposition}
Let $p\in\mathbb{N}$ be large enough so that $A_{p}$ is positive. For all $j\in\mathbb{N}$, there exists $K_{\phi,j}>0$ such that

$$
\| (\hat{L}_{p}(f))^{(j)} \|_{L^{2}(\rho)} \leq \dfrac{K_{\phi,j}}{a_{p}} \|f \|_{H^{j+1}(\rho )}.
$$
\label{proposition/estimation_derivee}
\end{proposition}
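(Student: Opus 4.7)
My plan is to start from the explicit decomposition furnished by Proposition \ref{proposition/formedescoeff},
\[
\hat{L}_p(f)^{(j)}(x) = \sum_{k=0}^{j+1} \frac{Q_{k,j,p}(x)}{A_p^{2^{j+1-k}}(x)}\, f^{(k)}(x),
\]
and, applying the triangle inequality in $L^{2}(\rho)$, to reduce the proof to bounding each individual summand $\|Q_{k,j,p}/A_p^{2^{j+1-k}}\cdot f^{(k)}\|_{L^{2}(\rho)}$ separately.

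The next step is to extract the factor $a_p^{-1}$ from the rational coefficient in a pointwise manner. Combining property $3)$ of Proposition \ref{proposition/formedescoeff}, namely $Q_{k,j,p}=a_p^{2^{j+1-k}(2m-1)-1}\tilde{Q}_{k,j,p}$ with $\tilde{Q}_{k,j,p}$ having coefficients uniformly bounded in $p$, with the lower bound $A_p(x)\geq A_p(0)\geq c_\phi\, a_p^{2m-1}$ obtained in Proposition \ref{proposition/bornefraction} for $p$ large enough, the exponents of $a_p$ cancel exactly:
\[
\frac{|Q_{k,j,p}(x)|}{A_p(x)^{2^{j+1-k}}} \;\leq\; \frac{a_p^{2^{j+1-k}(2m-1)-1}\,|\tilde{Q}_{k,j,p}(x)|}{(c_\phi\,a_p^{2m-1})^{2^{j+1-k}}} \;=\; \frac{C_{\phi,j,k}}{a_p}\,|\tilde{Q}_{k,j,p}(x)|.
\]
Each summand is thus controlled by $(C_{\phi,j,k}/a_p)\,\|\tilde{Q}_{k,j,p}\,f^{(k)}\|_{L^{2}(\rho)}$, where $\tilde{Q}_{k,j,p}$ is a polynomial of degree at most $2^{j+1-k}(2m-2)+1$ with coefficients bounded uniformly in $p$.

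To close the estimate I would invoke the generalized Poincar\'e inequality (Proposition \ref{proposition/poincaregeneralise}), which bounds $\|R\,f^{(k)}\|_{L^{2}(\rho)}$ by $\|f\|_{H^{k+r}(\rho)}$ whenever $\deg R \leq r(2m-1)-1$, and then sum over $k$ to produce $K_{\phi,j}$. The main obstacle is to guarantee that the Sobolev order $k+r$ needed never exceeds $j+1$: the naive degree bound $2^{j+1-k}(2m-2)+1$ forces $r$ to grow exponentially in $j-k$, which is incompatible with the target $H^{j+1}(\rho)$. Overcoming this requires going beyond the crude inequality $A_p(x)\geq A_p(0)$ and interpolating with the complementary lower bound $A_p(x)\geq c'\, a_p\, x^{2m-2}$, which follows from positivity of all the coefficients of $A_p$ and from the leading coefficient being $2mv_m\,a_p$. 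With this refined pointwise bound, the polynomial growth in $|x|$ of $\tilde{Q}_{k,j,p}$ is absorbed by that of $A_p^{2^{j+1-k}}$ in the denominator, so that the effective polynomial factor has a degree compatible with Proposition \ref{proposition/poincaregeneralise} at order $r \leq j+1-k$, and summation yields the announced estimate.
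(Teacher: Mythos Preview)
Your proof follows the paper's approach exactly up to the point where you bound each summand by
\[
\frac{C_{\phi,j,k}}{a_p}\,\|\tilde{Q}_{k,j,p}\,f^{(k)}\|_{L^{2}(\rho)}.
\]
From there, the paper simply applies Proposition~\ref{proposition/poincaregeneralise} with $r=2^{j+1}+1$ and obtains
\[
\|(\hat L_p f)^{(j)}\|_{L^2(\rho)}\le \frac{K_{\phi,j}}{a_p}\,\|f\|_{H^{2^{j+1}+j+2}(\rho)},
\]
which is \emph{not} the $H^{j+1}$ bound announced in the statement. That the intended conclusion is the weaker one is confirmed by Proposition~\ref{proposition/estimationnorme}, whose proof explicitly uses $\|f\|_{H^{2^{j+1}+j+2}(\rho)}$ and produces the recursion $\gamma_{n+1}=2^{\gamma_n+1}+\gamma_n+2$ of Theorem~\ref{theoreme/mainresult}. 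So the ``$j+1$'' in the statement is a typo, and the part of your argument you label as the ``naive'' one is precisely what the paper does.

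Your attempt to reach the genuine $H^{j+1}$ bound via the complementary inequality $A_p(x)\ge c'\,a_p\,x^{2m-2}$ has a real gap. Proposition~\ref{proposition/formedescoeff} only tells you that \emph{all} coefficients of $\tilde Q_{k,j,p}$ are bounded uniformly in $p$; it says nothing about the high-degree coefficients being small. If you take a monomial $|x|^{d}$ with $d$ close to the maximal degree $2^{j+1-k}(2m-2)+1$ and divide by the lower bound $(c'a_p\,x^{2m-2})^{2^{j+1-k}}$, you get
\[
\frac{a_p^{2^{j+1-k}(2m-1)-1}\,|x|^{d}}{(c'a_p)^{2^{j+1-k}}\,|x|^{2^{j+1-k}(2m-2)}}
\;=\;\frac{1}{c'^{\,2^{j+1-k}}}\,a_p^{\,2^{j+1-k}(2m-2)-1}\,|x|^{\,d-2^{j+1-k}(2m-2)},
\]
and the exponent of $a_p$ is \emph{positive} as soon as $m\ge 2$ and $k\le j$, so the bound blows up in $p$. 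The $x$-degree is indeed absorbed, but the $a_p$-scaling is not. What would be needed is a refinement of Proposition~\ref{proposition/formedescoeff} asserting that the coefficient of $x^{d}$ in $Q_{k,j,p}$ is $O(a_p^{\,2^{j+1-k}(2m-1)-d})$ (equivalently, that $Q_{k,j,p}(x)\approx a_p^{\,?}\,R_{k,j}(x/a_p)$ for a fixed polynomial $R_{k,j}$). This homogeneity is plausible and seems to hold in the double-well example computed after Proposition~\ref{proposition/bornefraction}, but it is neither stated nor proved in the paper, and your argument does not supply it.
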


\begin{proof}
We use the triangle inequality and  Proposition \ref{proposition/formedescoeff}:

\begin{eqnarray*}
\| (\hat{L}_{p}(f))^{(j)} \|_{L^{2}(\rho)} & \leq & \sum_{k=0}^{j+1} \left\| \dfrac{Q_{k,j,p}}{A_{p}^{2^{j+1-k}}} f^{(k)}  \right\|_{L^{2}(\rho)} \\
 & \leq &  \sum_{k=0}^{j+1} \dfrac{ a_{p}^{2^{j+1-k}(2m-1)-1} }{{A_{p}^{2^{j+1-k}}}(0)} \| \tilde{Q}_{k,j}   f^{(k)} \|_{L^{2}(\rho)} .
 \end{eqnarray*}
 
Now, we use the generalized Strong Poincaré inequality (\ref{proposition/poincaregeneralise}) with $r=2^{j+1}+1$, and  Proposition \ref{proposition/bornefraction}. This gives that there exists a constant $ K_{\phi,j}$ depending only on $j,\phi$ such that:

\begin{equation}
\| (\hat{L}_{p}(f))^{(j)} \|_{L^{2}(\rho)}  \leq   \dfrac{ K_{\phi,j}}{a_{p}} \sum_{k=0}^{j+1} \| f^{(k)} \|_{H^{2^{j+1}+1}(\rho)}
 \leq    \dfrac{ K_{\phi,j}}{a_{p}}  \| f \|_{H^{2^{j+1}+j+2}(\rho)}.
 \end{equation}

\end{proof}

Now that the $L^{2}(\rho)$ norm of $(\hat{L}_{p}(f))^{(j)}$ is estimated, these estimates can be summed to give an estimation of the Sobolev norm.

\begin{proposition}
Let $p\in\mathbb{N}$ be large enough so that $A_{p}$ is positive. Let $r\in\mathbb{N}$. There exists a constant $\theta_{\phi,r}>0$ such that for all $f\in C^{\infty}_{c}(\mathbb{R})$,

$$
\| \hat{L}_{p}(f) \|_{H^{r}(\rho)} \leq \dfrac{\theta_{\phi,r}}{a_{p}} \| f \|_{H^{2^{r+1}+r+2}(\rho)}.
$$
\label{proposition/estimationnorme}
\end{proposition}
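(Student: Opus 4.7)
The plan is essentially a direct summation of the pointwise bounds on each derivative already established in Proposition \ref{proposition/estimation_derivee}, unrolled through the very definition of the weighted Sobolev norm, with monotonicity in the Sobolev index used to collapse all right-hand sides to a single one.

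First, I would write out $\|\hat{L}_{p}(f)\|_{H^{r}(\rho)}$ according to the definition given in the introduction, namely as the (finite) sum $\sum_{j=0}^{r}\|(\hat{L}_{p}(f))^{(j)}\|_{L^{2}(\rho)}$. This reduces the problem to estimating the $L^{2}(\rho)$-norm of each derivative $(\hat{L}_{p}(f))^{(j)}$ for $0\leq j\leq r$.

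Next, for each such $j$ I would invoke Proposition \ref{proposition/estimation_derivee}, which (per its proof) furnishes a constant $K_{\phi,j}>0$ with
\[
\|(\hat{L}_{p}(f))^{(j)}\|_{L^{2}(\rho)}\leq \dfrac{K_{\phi,j}}{a_{p}}\|f\|_{H^{2^{j+1}+j+2}(\rho)},
\]
valid for all $p$ large enough so that $A_{p}$ is positive (uniformly in $j\leq r$ by taking the maximum of finitely many thresholds). The key observation is that $j\mapsto 2^{j+1}+j+2$ is increasing, so the largest Sobolev index needed is attained at $j=r$, giving $2^{r+1}+r+2$. Since $H^{s}(\rho)\hookrightarrow H^{s'}(\rho)$ with norm one whenever $s'\leq s$ (direct from the definition as a sum), I can bound each $\|f\|_{H^{2^{j+1}+j+2}(\rho)}$ by $\|f\|_{H^{2^{r+1}+r+2}(\rho)}$.

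Summing the $r+1$ resulting inequalities and setting $\theta_{\phi,r}:=\sum_{j=0}^{r}K_{\phi,j}$ yields the announced bound
\[
\|\hat{L}_{p}(f)\|_{H^{r}(\rho)}\leq \dfrac{\theta_{\phi,r}}{a_{p}}\|f\|_{H^{2^{r+1}+r+2}(\rho)}.
\]
There is no real obstacle: the hard work has already been done in Propositions \ref{proposition/formedescoeff} and \ref{proposition/estimation_derivee}, which encapsulate both the combinatorial structure of the iterated operator and the use of the generalized strong Poincaré inequality to trade polynomial factors for regularity. The only care needed here is to make sure the constant $\theta_{\phi,r}$ depends only on $\phi$ and $r$ (which it does, being a finite sum of the $K_{\phi,j}$), and that the threshold on $p$ for the positivity of $A_{p}$ is taken as the maximum over $j\in\{0,\dots,r\}$, which is still a finite integer $N_{0}$ depending only on $\phi$.
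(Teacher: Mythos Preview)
Your proposal is correct and follows essentially the same approach as the paper: expand the $H^{r}(\rho)$ norm as $\sum_{j=0}^{r}\|(\hat{L}_{p}(f))^{(j)}\|_{L^{2}(\rho)}$, apply Proposition~\ref{proposition/estimation_derivee} termwise, use monotonicity of the Sobolev index to replace each $\|f\|_{H^{2^{j+1}+j+2}(\rho)}$ by $\|f\|_{H^{2^{r+1}+r+2}(\rho)}$, and set $\theta_{\phi,r}=\sum_{j=0}^{r}K_{\phi,j}$. One minor remark: the positivity threshold on $p$ comes from Proposition~\ref{proposition/bornefraction} and does not depend on $j$, so there is a single $N_{0}$ rather than a maximum over $j$.
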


\begin{proof}
The proof relies on a direct computation allowed by the  Proposition \ref{proposition/estimation_derivee}:
\begin{eqnarray*}
\| \hat{L}_{p}(f) \|_{H^{r}(\rho)} & = &  \sum_{j=0}^{r} \| (\hat{L}_{p}(f))^{(j)} \|_{L^{2}(\rho )} \\
& \leq &  \sum_{j=0}^{r} \dfrac{K_{\phi,j}}{a_{p}} \| f \|_{H^{2^{j+1}+j+2}(\rho)} \\
& \leq &  \dfrac{1}{a_{p}}  \left(\sum_{j=0}^{r} K_{\phi,j} \right) \| f \|_{H^{2^{r+1}+r+2}(\rho)}. \end{eqnarray*}
We conclude by defining  $\theta_{\phi,r} :=  \sum_{j=0}^{r} K_{\phi,j}$.
\end{proof}

Finally, we can prove the main result as stated in Theorem \ref{theoreme/mainresult}.

\begin{proof}[Proof of Theorem \ref{theoreme/mainresult}]

First,  Parseval identity implies:

$$
\| f - \pi_{V_{N}}f \|_{L^{2}(\rho)}^{2} = \sum_{n=N+1}^{\infty}\left\lvert \int_{\mathbb{R}} f \tilde{P}_{n} \rho dx \right\lvert^{2}.
$$

Then, we apply integrations by part, Cauchy Schwarz inequality and the submutiplicativity of the operator norms. For $k>m$ fixed, we have

\begin{eqnarray*}
\sum_{n=N+1}^{\infty}\left\lvert \int_{\mathbb{R}} f \tilde{P}_{n} \rho dx \right\lvert^{2} & \leq & \sum_{n=N+1}^{\infty}  \int_{\mathbb{R}} \left( \bigcircop_{j=1}^{k} \hat{L}_{n+k-j+1} \right)(f) \tilde{P}_{n+k} \rho dx  \\
& \leq &  \sum_{n=N+1}^{\infty} \prod_{j=1}^{k} \|\hat{L}_{n+k-j+1} \|^{2}_{H^{\gamma_{j}}(\rho )\to H^{\gamma_{j-1}}(\rho )} \| f\|^{2}_{H^{\gamma_{k}}(\rho )} \\
 & \leq &  \sum_{n=N+1}^{\infty}\prod_{j=1}^{k}   \dfrac{\theta_{\phi,j}^{2}}{a^{2}_{n+k-j+1}} \|f\|^{2}_{H^{\gamma_{k}}(\rho )} \\
 & \leq &  \|f\|^{2}_{H^{\gamma_{k}}(\rho )} \left(\prod_{j=1}^{k} \theta_{\phi,j}^{2}\right) \sum_{n=N+1}^{\infty}\left(\prod_{j=1}^{k}   \dfrac{a_{n}^{2}}{a_{n+k-j+1}^{2}}  \right)\dfrac{1}{a_{n}^{2k}}  .
\end{eqnarray*}

Finally, we use  Proposition \ref{proposition/estimationnorme}, Magnus's Theorem \ref{theoreme/magnus} and we get

\begin{eqnarray*}
\sum_{n=N+1}^{\infty}\left\lvert \int_{\mathbb{R}} f \tilde{P}_{n} \rho dx \right\lvert^{2} & \leq &  \sum_{n=N+1}^{\infty}\prod_{j=1}^{k}   \dfrac{\theta_{\phi,j}^{2}}{a^{2}_{n+k-j+1}} \|f\|^{2}_{H^{\gamma_{k}}(\rho )} \\
 & \leq &  \|f\|^{2}_{H^{\gamma_{k}}(\rho )} \left(\prod_{j=1}^{k} \theta_{\phi,j}^{2}\right) \sum_{n=N+1}^{\infty}\left(\prod_{j=1}^{k}   \dfrac{a_{n}^{2}}{a_{n+k-j+1}^{2}}  \right)\dfrac{1}{a_{n}^{2k}}  .
\end{eqnarray*}

Hence there exists a constant $\Lambda_{\phi,k}$ such that the projection error is bounded by

$$
\Lambda_{\phi,k} \| f\|_{H^{\gamma_{k}}(\rho)}^{2} \sum_{n=N+1}^{\infty}  \dfrac{1}{a_{n}^{2k}}.
$$

It is the queue of a series whose term is equivalent to the term of a Riemann series by Theorem \ref{theoreme/magnus}. This queue is finite if and only if $k>m$. Therefore, for all $f\in C^{\infty}_{c}(\mathbb{R})$,

$$
\| f - \pi_{V_{N}}f \|_{L^{2}(\rho)}^{2} \leq \Lambda_{\phi,k} \| f\|_{H^{\gamma_{k}}(\rho)}^{2} \sum_{n=N+1}^{\infty}  \dfrac{1}{n^{\frac{k}{m}}}.
$$

We can be more explicit and give the order of convergence of the right-hand side. As a byproduct of the integral test for convergence, we have that

$$
\sum_{n=N+1}^{\infty} \dfrac{1}{n^{\frac{k}{m}}} \leq \int_{N}^{\infty} \dfrac{1}{x^{\frac{k}{m}}} dx = \dfrac{m}{k-m} \dfrac{1}{N^{\frac{k}{m}-1}}.
$$

Perhaps modifying $\Lambda_{\phi,k}$, we get the bound

$$
\| f - \pi_{V_{N}}(f) \|_{L^{2}(\rho)}^{2} \leq \Lambda_{\phi,k} \| f\|_{H^{\gamma_{k}}(\rho)}^{2} \dfrac{1}{N^{\frac{k}{m}-1}} .
$$

By density of the $C^{\infty}_{c}$ functions in the Sobolev space $H^{\gamma_{k}}(\rho)$ and by continuity of the identity and of the projection $\pi_{V_{N}}$  of $L^{2}(\rho)$ in itself, we obtain the result for all $f\in H^{\gamma_{k}}(\rho)$. The proof of Theorem \ref{theoreme/mainresult} is complete.
\end{proof}

\newpage

\section{Numerical simulations}
\label{section/4}

\subsection{Generating orthonormal polynomials}
\label{subsection/4.1}

\subsubsection{The algorithms}

In general, the generation of a sequence of orthonormal polynomials is challenging. Indeed, the values of the recursion coefficients $a_{n}$ are only known exactly in a few cases, such as the Hermite case. In order to compute their values for a potential $\phi$ of degree strictly larger than $2$, we will apply the Chebychev algorithm detailed in \cite{gautschi}.

It is a moment-based method. If $\mu_{k}$ stands for the $k$-th moment of the measure $\rho(x) dx$, then for $n\in\mathbb{N}^{*}$ fixed, the algorithm requires the moments $(\mu_{k})_{0\leq k\leq 2n-1}$ and returns $(\beta_{k})_{0\leq k\leq n-1}= (a^{2}_{k})_{0\leq k \leq n-1}$. The  algorithm is the following.

\begin{algo}[Chebychev algorithm \cite{gautschi}]
\ \\

\textbf{Initialisation :}

$$\beta_{0} = \mu_{0}$$

$$\sigma_{-1,l} = 0 \ \mbox{ for } \  l=1,...,2n-2$$ 

$$\sigma_{0,l} = \mu_{l} \ \mbox{ for } \ l=0,...,2n-1$$ 
\ \\

\textbf{For $k=1,..,n-1$ do : }

$$\sigma_{k,l} = \sigma_{k-1,l+1} - \beta_{k-1}\sigma_{k-2,l} \ \mbox{ for } \ l=k,..,2n-k-1  $$

$$\beta_{k} = \dfrac{\sigma_{k,k}}{\sigma_{k-1,k-1}}$$

\textbf{End do}
\ \\

\textbf{Return} $(\beta_{k})_{0\leq k \leq n-1} $
\end{algo}

As our measure $\rho(x)dx$ is even, only  even moments are non-nul and there are some simplifications we don't explicit here.

Two difficulties arise. The first one is that, according to \cite{gautschi}, the algorithm is ill-conditionned. The error can grow fast as $k$ increases. This leads us to the second difficulty: the moments must be computed with a high precision in order to get an exploitable result. A first <<brut-force>> approach would be to compute all the required moments by using a very accurate quadrature method. A second approach is based upon the following  induction relation satisfied by the moments $\mu_{k}$ of $\rho(x)dx$.

\begin{proposition}
Let $\phi(x) = \sum_{p=0}^{m}v_{p}x^{2p}$ be a even polynomial with $v_{m}>0$. For all $k\in\mathbb{N}$, 

$$
\left\lbrace
\begin{array}{ccc}
2mv_{m} \mu_{2(m+k)} & = & (2k+1)\mu_{2k} - \sum_{p=1}^{m-1} 2pv_{p} \mu_{2(p+k)} ,\\
\mu_{2k+1} =   0. & & 
\end{array}
\right.
$$

\end{proposition}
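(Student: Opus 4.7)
The plan is to derive the recursion from the differential identity $\rho' = -\phi' \rho$ via a single integration by parts.

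First I would dispose of the odd moments: since $\phi$ is even, $\rho$ is even, so $x^{2k+1}\rho(x)$ is odd and integrable (because $\phi$ has positive leading coefficient), hence $\mu_{2k+1}=0$.

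For the even recursion, the idea is to start from the exact divergence
\[
\frac{d}{dx}\!\left(x^{2k+1}\rho(x)\right) = (2k+1)\,x^{2k}\rho(x) - x^{2k+1}\phi'(x)\rho(x).
\]
Integrating over $\mathbb{R}$ and using that $x^{2k+1}\rho(x) \to 0$ as $|x|\to\infty$ (since $\phi$ has positive leading coefficient and even degree), the left-hand side vanishes, yielding
\[
(2k+1)\mu_{2k} = \int_{\mathbb{R}} x^{2k+1}\phi'(x)\rho(x)\,dx.
\]
Writing $\phi'(x) = \sum_{p=1}^{m} 2p v_p x^{2p-1}$ turns the right-hand side into $\sum_{p=1}^{m} 2pv_p \mu_{2(p+k)}$. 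Isolating the $p=m$ term gives exactly
\[
2mv_m \mu_{2(m+k)} = (2k+1)\mu_{2k} - \sum_{p=1}^{m-1} 2p v_p \mu_{2(p+k)},
\]
which is the claim.

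There is essentially no obstacle here; the only point requiring a line of care is justifying that the boundary terms vanish in the integration by parts, which follows from the super-exponential decay of $\rho$ guaranteed by assumption \textbf{(H)} (since $v_m>0$ and $\deg\phi=2m$, the polynomial $x^{2k+1}\rho(x)$ tends to $0$ at $\pm\infty$ and all moments of $\rho$ are finite).
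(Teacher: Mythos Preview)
Your proof is correct and follows essentially the same approach as the paper: both use a single integration by parts on $x^{2k+1}\rho(x)$, invoke the vanishing of the boundary term from the decay of $\rho$, expand $\phi'$ to obtain $(2k+1)\mu_{2k}=\sum_{p=1}^{m}2pv_p\,\mu_{2(p+k)}$, and isolate the $p=m$ term. The only cosmetic difference is that you phrase the computation as integrating the exact derivative $\frac{d}{dx}(x^{2k+1}\rho)$, while the paper writes out the integration by parts explicitly.
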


\begin{proof}
All the odd moments are zero due to the fact that $\rho = e^{-\phi}$ is even. Next, let us choose an even moment $\mu_{2k}$ with $k\in\mathbb{N}$. By integration by parts:

\begin{eqnarray*}
\mu_{2k} & = & \int_{\mathbb{R}} t^{2k}  e^{-\phi(t)} dt \\
		 & = & \left[ \dfrac{t^{2k+1}}{2k+1} e^{-\phi(t)} \right]_{-\infty}^{+\infty} + \int_{\mathbb{R}} \dfrac{t^{2k+1}}{2k+1} \phi'(t) e^{-\phi(t)} dt \\
		 & = & \sum_{p=1}^{m} \dfrac{2pv_{p}}{2k+1} \int_{\mathbb{R}} t^{2k+2p} e^{-\phi(t)} dt \\
		 & = & \sum_{p=1}^{m} \dfrac{2pv_{p}}{2k+1}\mu_{2k+2p}. \\
\end{eqnarray*}

By rearanging the terms, we find that $2mv_{m} \mu_{2(m+k)}  = (2k+1)\mu_{2k} - \sum_{p=1}^{m-1} 2pv_{p} \mu_{2(p+k)}$.
\end{proof}

The aformentionned relation allows one to compute all the moments, provided that the first even moments $\mu_{0},\mu_{2},...,\mu_{2m-2}$ are known. These moments can be computed by using a high-order quadrature method. In the Hermite case, the recurrence relation provides an explicit  expression for $\mu_{2k}$:

$$
\mu_{2k} = \dfrac{(2k)!}{2^{k}k!}.
$$

Stirling's approximation stays then that $\mu_{2k} \sim \sqrt{2} \dfrac{k^{k}}{(2e)^{k}}$, hence growing fast when $k\to \infty$. This may lead to errors if the significand precision used in the implementation isn't large enough to store all the digits. Because of this issue and the fact that the Chebychev algorithm is ill-conditionned, we will use the quadruple precision floating point format included in Fortran.

\subsubsection{Comparison of the two methods} 

First, we compare the results given by the brute-force method  and by the induction method on the Hermite case. For this case ,  $\mu_{2k} = \dfrac{(2k)!}{2^{k}k!}$ and $\beta_{k}=k$.

All the numerical integration are done by using a composite Weddle-Hardy quadrature on the interval $[-30,30]$ with 6*350 points. The results are in Figure \ref{figure/erreur_construction_polynome}.

We chosed to represent the $L^{2}(\rho)$ error between exact Hermite polynomials and there approximation  and the evolution of the error on the coefficients $\beta_{k}$. It seems that the two methods give similar results, although the recurrence based-method behaves a little better.

Then, we compare the results given by the two methods on the double well case which we recall is the case when $\phi(x)=(x-1)^{2}(x+1)^{2}$. This time, the theoretical values are unknown. At least, Magnus's Theorem \ref{theoreme/magnus} gives the asymptotic behaviour of $\sqrt{\beta_{n}}$. We can then test if the sequence generated by the Chebychev algorithm verify this asymptotic (see Figure \ref{figure/beta}).

The recurrence-based method is unstable, and diverges for $N\approx 30$. For larger  $N$, $\beta_{N}$ is even  negative. The recurrence relation is too sensitive to initial conditions and propagates the error quickly. The brute-force method seems more stable, as it starts to diverge for $N\geq 60$.  In the following, we will only use the brute-force method which is more costly but is more stable.

\begin{figure}[h!]
\centering
\includegraphics[scale=0.5]{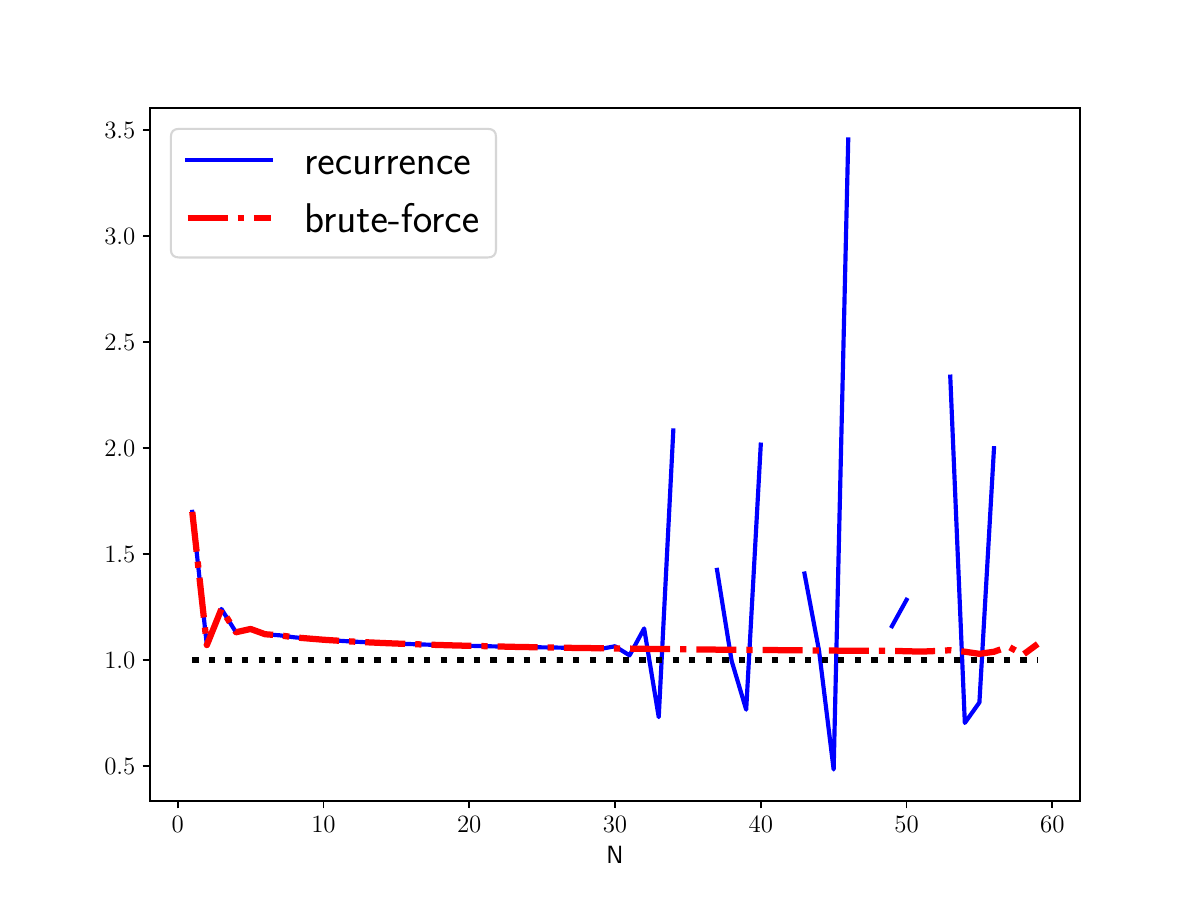}
\caption{Asymptotic behavior of the recursion coefficients $\beta_{k}$ computed by the Chebychev algorithm, for the brute-foce and the recursion-based method.}
\label{figure/beta}
\end{figure}

\begin{figure}[h!]
\centering

\begin{subfigure}{\textwidth}
\centering
\begin{tabular}{|c|c|c|}
\hline
$N$    &   brute-force  & recurrence-based \\
\hline
10	& $1.5 *10^{-30}$ & $1.8* 10^{-31}$ \\
\hline
20	& $9.0 *10^{-24}$ & $4.4* 10^{-26}$ \\
\hline 
30	& $2.1 *10^{-20}$ &  $2.3* 10^{-22}$ \\
\hline
40	& $9.6 *10^{-16}$	& $2.7* 10^{-16}$ \\
\hline
50	& $1.8 *10^{-10}$	& $2.3* 10^{-11}$ \\
\hline
60	& $4.7 *10^{-6}$ & $1.1* 10^{-6}$ \\
\hline
70	& $0.27$    & $0.13$ \\
\hline
\end{tabular}
\subcaption{Evolution of the error $\max_{k\leq N} |\beta_{k}-k|$ on the coefficients $\beta_{k}$.}
\end{subfigure}

\begin{subfigure}{\textwidth}
\centering
\includegraphics[width=0.6\textwidth]{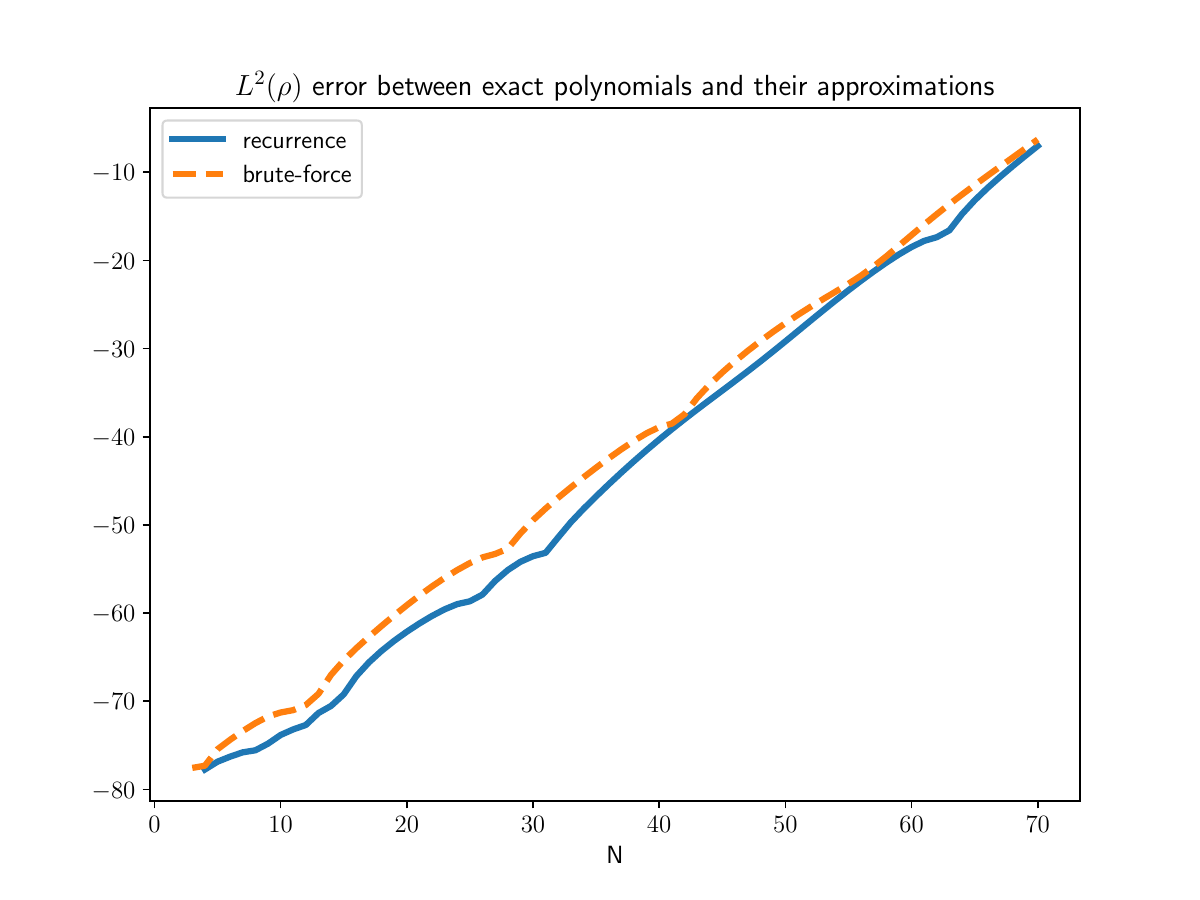}
\subcaption{Evolution of the $L^{2}(\rho)$ error between exact and approximate Hermite polynomials. The error is in log scale.}
\end{subfigure}
\caption{Comparison of the brute-force and of the recurrence-based methods in the Hermite case.}
\label{figure/erreur_construction_polynome}

\end{figure}

\newpage
\ \\
\newpage
\subsection{Approximation of functions by orthonormal polynomials}
\label{subsection/4.2}
Section \ref{subsection/4.1} addressed the computation of  orthonormal polynomials with respect to the measure $\rho(x)dx$. Now, we evaluate the projection error of a function $f$ for several truncation rank $N$. Since the order of convergence should depend on the regularity of $f$, we will approximate  different smooth and non-smooth functions and compare the experimental orders of convergence with the theoretical ones.

The three smooth tested functions  are 

$$
f(x) = \exp(x) ; \ \ \ g(x) = \exp(0.1*x^{2}) ; \ \ \ h(x) = \cos(x).
$$

The four non-smooth  tested functions are 

$$
f_{k}(x) = 
\left\lbrace
\begin{array}{cc}
0 & \mbox{ if } x<0 \\
x^{k} & \mbox{ if } x\geq 0
\end{array}
\right. \ \ k=1,2,3,4.
$$

Each $f_{k}$ belongs to $H^{k}(\rho)$ but not to $H^{k+1}(\rho)$.

\subsubsection{Hermite case:  $\phi(x) = \frac{1}{2}(x^{2}+\ln (2\pi))$}

For the smooth functions, the projection error decays very fast (figure (\ref{figure/m1_projection_error})), as the graph are not straight lines. For $N\geq 30$ the error equals at most $10^{-9}$. 

For non-smooth functions, the projection error decays polynomialy (figure (\ref{figure/m1_projection_error})). The order of convergence increases as the regularity $k$ increases. In fact, Proposition (\ref{proposition/hermitecase}) ensures an order at least equals to $\frac{k}{2}$. Experimental orders agree with this result, and  show that the value $\frac{k}{2}$ is nearly optimal.

\begin{figure}[h!]
\centering
\begin{subfigure}{\textwidth}
\centering
\includegraphics[width=0.8\textwidth]{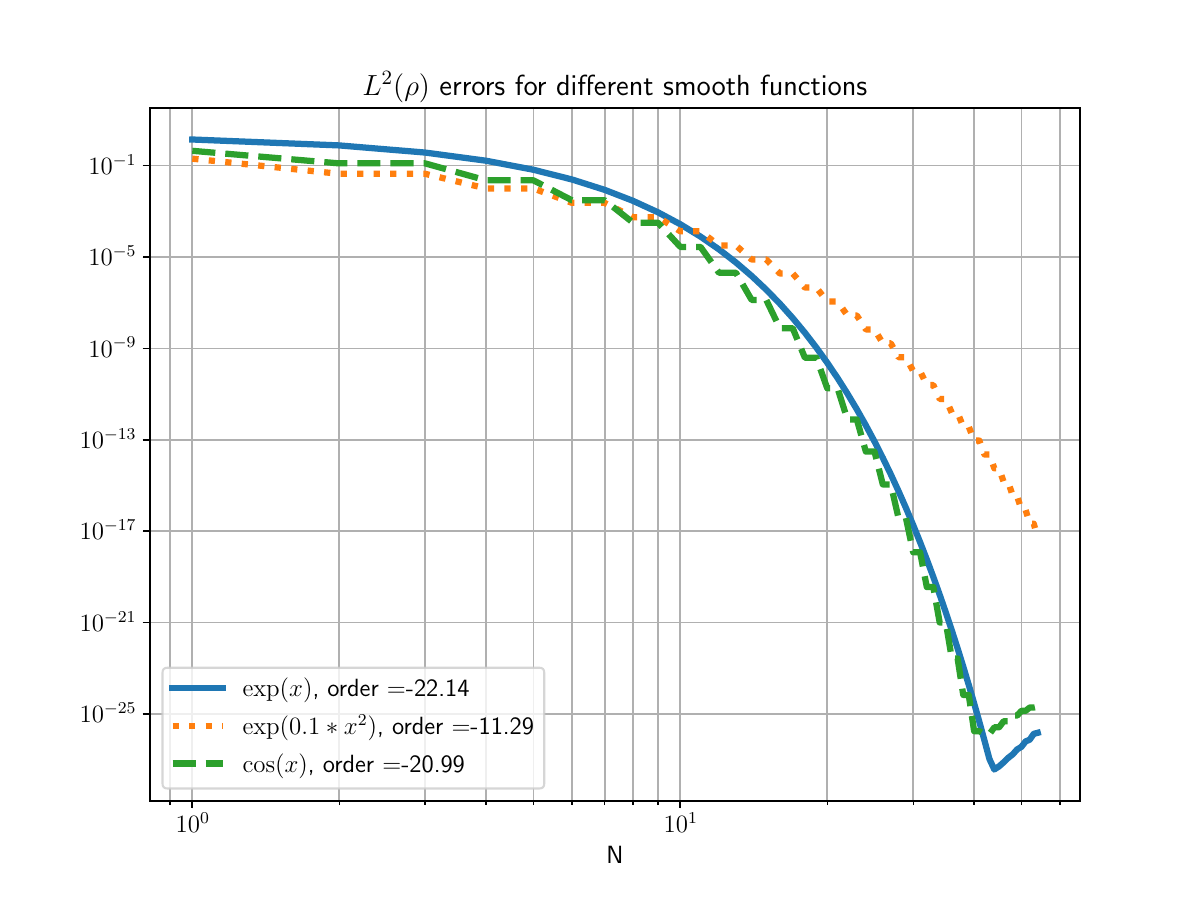}
\subcaption{Smooth functions}
\end{subfigure}
\begin{subfigure}{\textwidth}
\centering 
\includegraphics[width=0.8\textwidth]{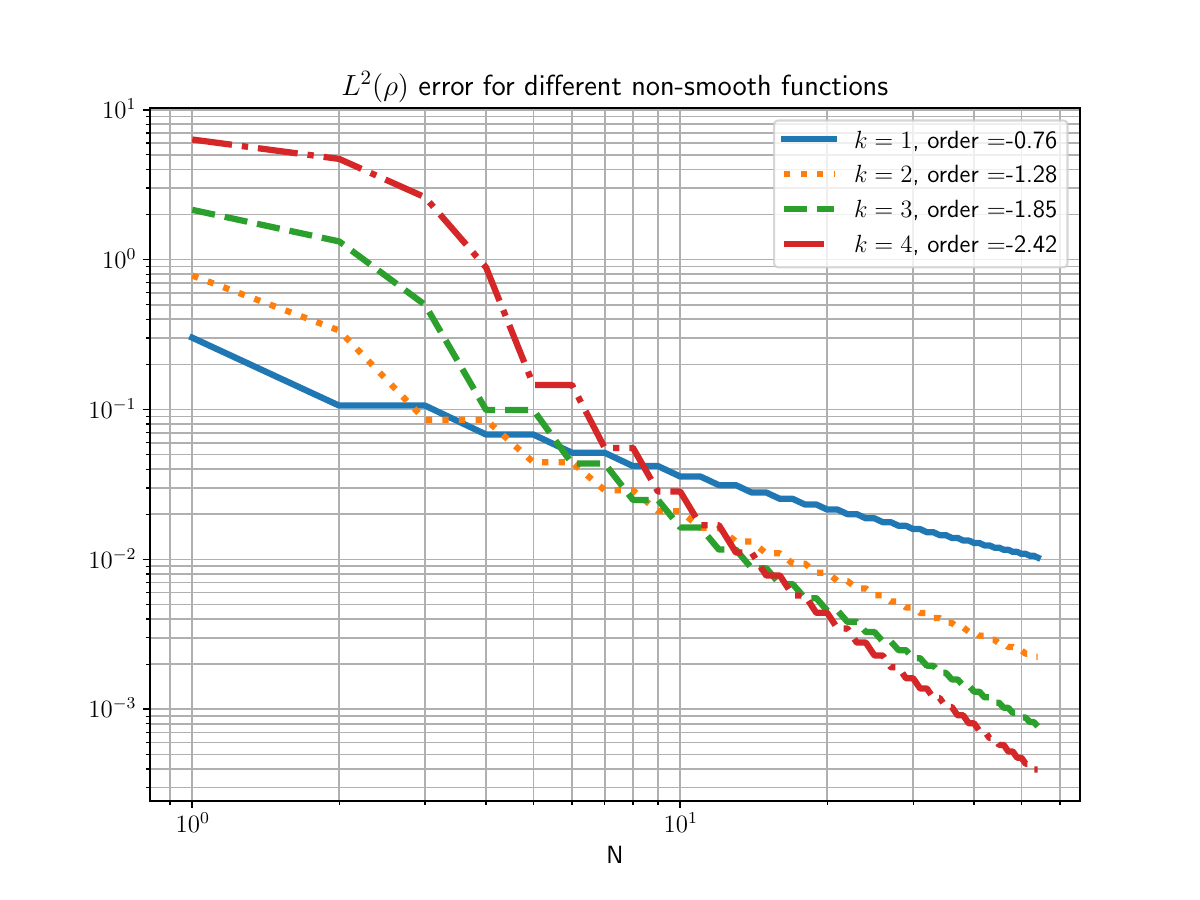}
\subcaption{Non-smooth functions}
\end{subfigure}
\caption{Evolution of the projection error. The $y$-axis is in log scale. Orders of convergence are displayed in the legend.}
\label{figure/m1_projection_error}
\end{figure}

\subsubsection{Double well case:  $\phi(x) =(x-1)^{2}(x+1)^{2}$}

Although our main result is useless in practice due to the exponentially big constants,  we run tests on the double well case.
For the smooth functions, the observations are the same as in the Hermite case (Figure \ref{figure/m2_projection_error}).

For non-smooth functions, the projection error decays polynomialy (Figure \ref{figure/m2_projection_error}). The order of convergence increases as the regularity $k$ increases.

\begin{figure}[h!]
\centering
\begin{subfigure}{\textwidth}
\centering
\includegraphics[width=0.8\textwidth]{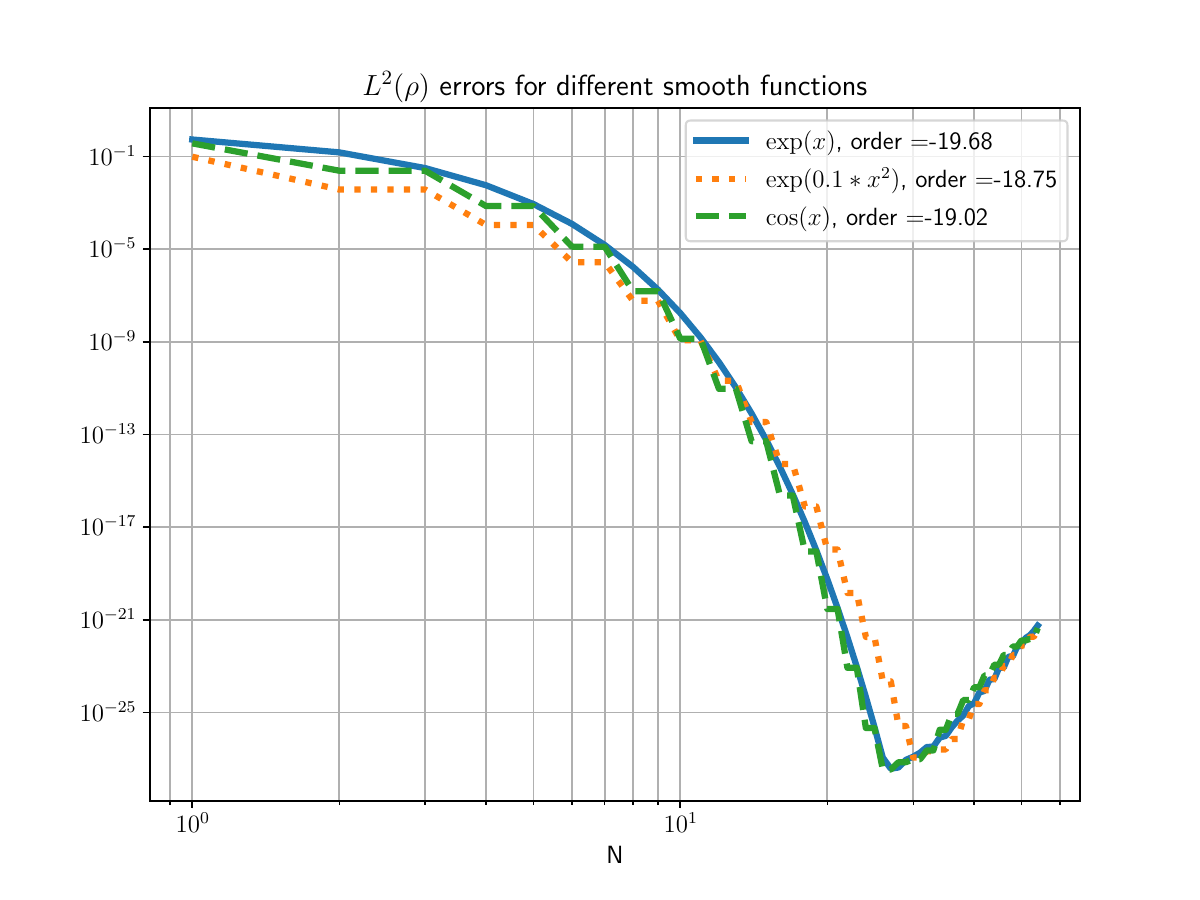}
\subcaption{Smooth functions}
\end{subfigure}
\begin{subfigure}{\textwidth}
\centering 
\includegraphics[width=0.8\textwidth]{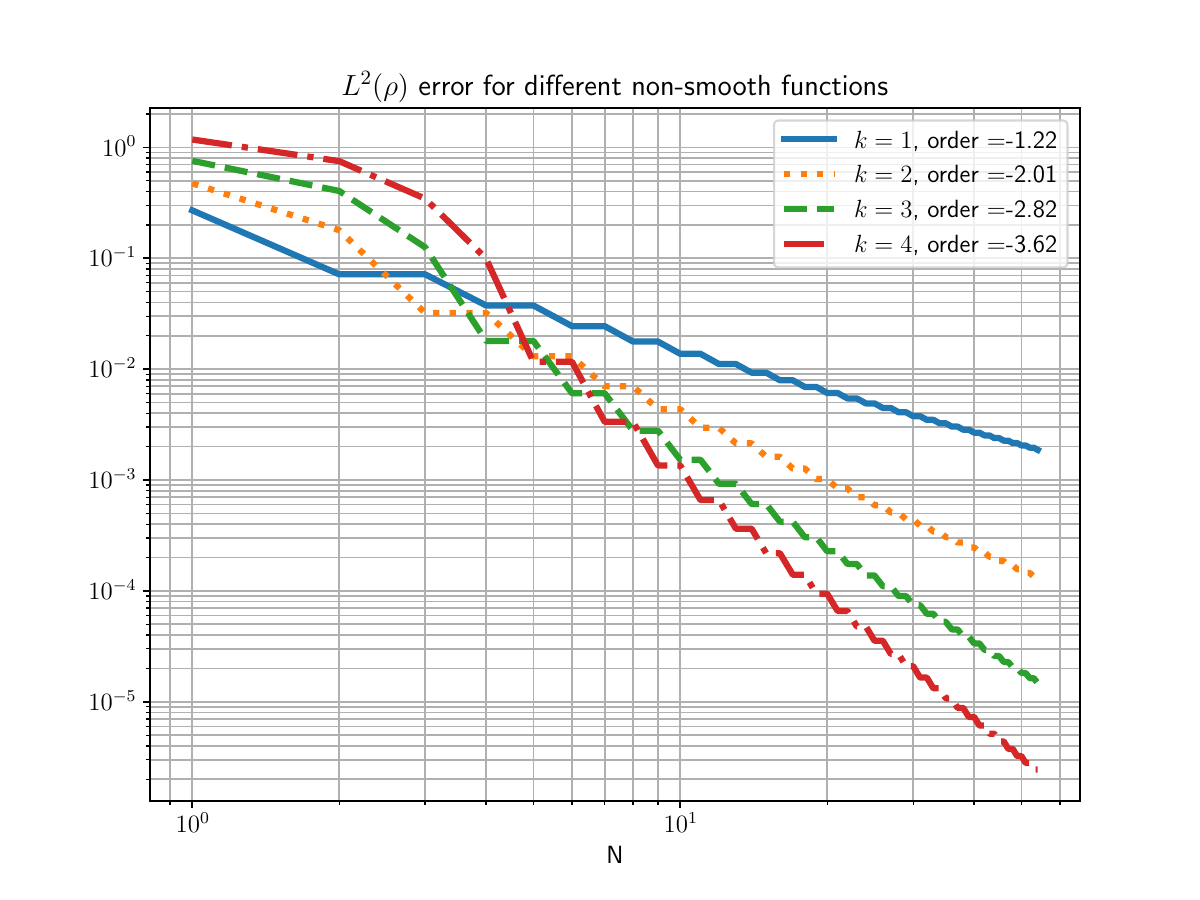}
\subcaption{Non-smooth functions}
\end{subfigure}
\caption{Evolution of the projection error. The $y$-axis is in log scale. Orders of convergence are displayed in the legend.}
\label{figure/m2_projection_error}
\end{figure}
\newpage
\ \\
\ \\
\ \\
\ \\

\section{Remarks on the proof of the main theorem}

Our proof is hard to use in practice due to the weak  bounds of the rational fraction $\left|\dfrac{Q_{k,j,p}(x)}{A_{p}^{2^{j+1-k}}(x)}\right|$ by $\left|\dfrac{Q_{k,j,p}(x)}{A_{p}^{2^{j+1-k}}(0)}\right|$   in Proposition \ref{proposition/estimation_derivee}. As the degree of the polynomial $Q_{k,j,p}$ grows rapidly with $k$, we need to apply the Strong Poincaré inequality which only works for  very  regular functions. 

Remember that the operator in the general case reads
$$
\hat{L}_{n}(f) = - \dfrac{1}{A_{n}} f' +  \dfrac{B_{n}A_{n}+\phi'A_{n}+A'_{n}}{A_{n}^{2}} f  .
$$ 

To get a better insight on the problem, we place ourselves in the double well case. We draw the graphs of the two rationnal functions $G_{n}:=\frac{1}{A_{n}}$, $F_{n}:=\frac{B_{n}A_{n}+\phi'A_{n}+A'_{n}}{A_{n}^{2}}$ and its asymptote $\frac{x}{a_{n}}$ for several integers $n$ (Figure \ref{figure/n}). We see that $F_{n}$ becomes closer to its asymptote as  $n$ becomes larger. We may write 

$$
F_{n}(x) = \dfrac{x}{a_{n}} + R_{n}(x)
$$ 

with $R_{n}$ the remainder term. Hence, $\hat{L}_{n}(f)$ becomes

$$
\hat{L}_{n}(f) = - \dfrac{1}{A_{n}} f' + \dfrac{x}{a_{n}} f + R_{n} f  .
$$

Let us  now try to estimate the projection error with this new identity. By using successively  Parseval formula, Cauchy-Schwarz inequality, Proposition \ref{proposition/bornefraction} and the Strong Poincaré inequality, we get :

\begin{eqnarray*}
\| f - \pi_{V_{N}}f \|_{L^{2}(\rho)}^{2} & = & \sum_{n=N+1}^{\infty} \left|\int_{\mathbb{R}} f \tilde{P}_{n} \rho dx \right|^{2} \\
 & = &  \sum_{n=N+1}^{\infty}  \left|\int_{\mathbb{R}} \hat{L}_{n+1}(f)  \tilde{P}_{n+1} \rho dx \right|^{2} \\
 & \leq &   \sum_{n=N+1}^{\infty}  \left|\int_{\mathbb{R}}( \frac{1}{A_{n+1}}f' + \frac{x}{a_{n+1}}f + R_{n+1} f) \tilde{P}_{n+1} \rho dx \right|^{2}  \\
 & \leq &  3 \sum_{n=N+1}^{\infty}   \left|\int_{\mathbb{R}} \frac{1}{A_{n+1}}f' \tilde{P}_{n+1} \rho dx \right|^{2} + \left|\int_{\mathbb{R}} \frac{x}{a_{n+1}}f  \tilde{P}_{n+1}\rho dx \right|^{2} + \left|\int_{\mathbb{R}} R_{n+1} f \tilde{P}_{n+1} \rho dx \right|^{2} \\
 & \leq &  3 \sum_{n=N+1}^{\infty}   \left\| \frac{1}{A_{n+1}}f'\right\|^{2} + \left|\int_{\mathbb{R}} \frac{x}{a_{n+1}}f  \tilde{P}_{n+1}\rho dx \right|^{2} + \left|\int_{\mathbb{R}} R_{n+1} f \tilde{P}_{n+1} \rho dx \right|^{2} \\
 & \leq &  3 \left\| f'\right\|^{2} \sum_{n=N+1}^{\infty}  \frac{1}{A_{n+1}(0)^{2}}  + 3\dfrac{1}{a_{N+2}^{2}} \|xf\|^{2} + 3  \sum_{n=N+1}^{\infty}  \left|\int_{\mathbb{R}} R_{n+1} f \tilde{P}_{n+1} \rho dx \right|^{2} \\
  & \leq &  3 \left\| f'\right\|^{2} \sum_{n=N+1}^{\infty}  \frac{1}{A_{n+1}(0)^{2}}  + 3 C_{p}^{2}\dfrac{1}{a_{N+2}^{2}} \|f\|^{2}_{H^{1}(\rho)} + 3  \sum_{n=N+1}^{\infty}  \left|\int_{\mathbb{R}} R_{n+1} f \tilde{P}_{n+1} \rho dx \right|^{2} .
\end{eqnarray*}

Here, we have used the classic Strong Poincaré inequality for $H^{1}(\rho)$ functions. However, we don't see how to estimate the last series, as  we lack  estimates on these integrals. A better comprehension of this last series may lead to a more precise approximation theorem than Theorem \ref{theoreme/mainresult}.  

\begin{figure}[h!]

\begin{subfigure}{0.45\textwidth}

\includegraphics[width=\textwidth]{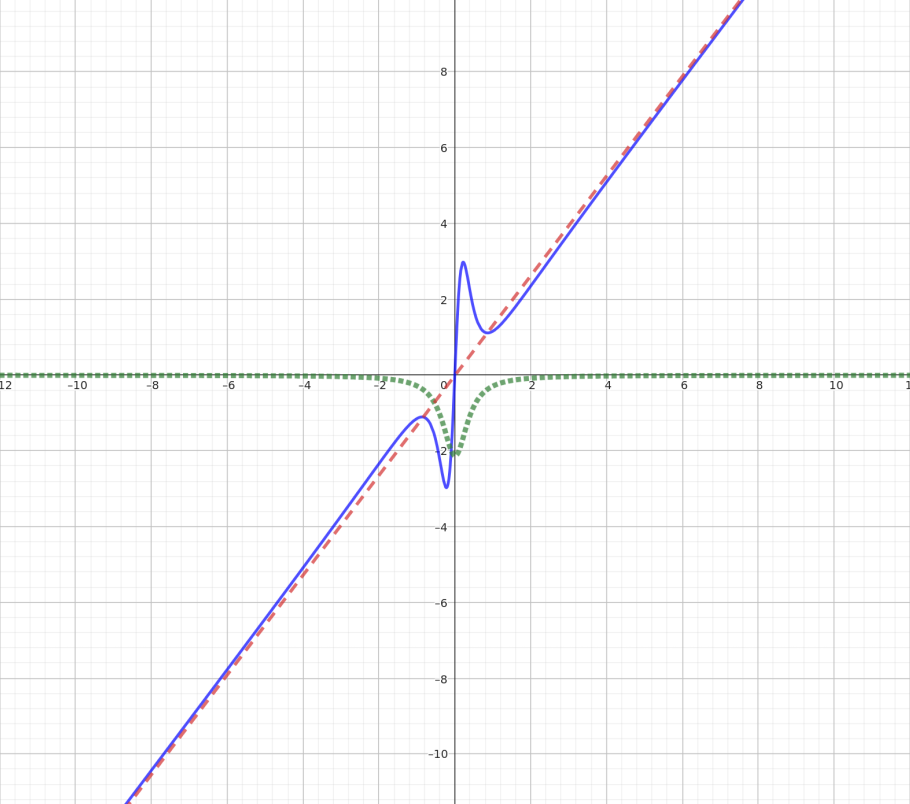}
\subcaption{$n=4$}
\end{subfigure}
\hfill
\begin{subfigure}{0.45\textwidth}
 
\includegraphics[width=\textwidth]{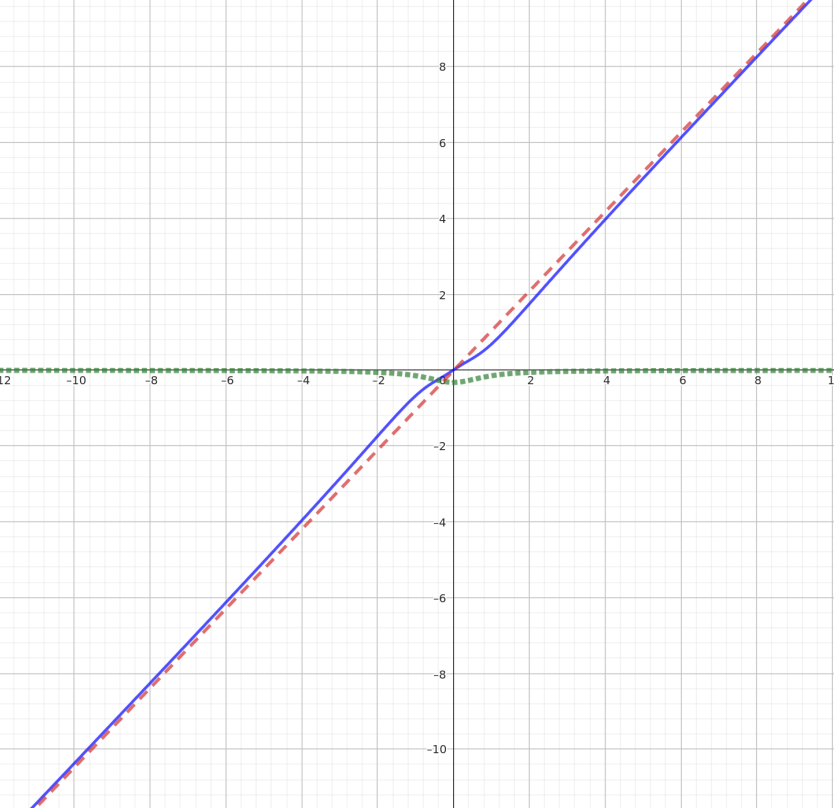}
\subcaption{$n=10$}
\end{subfigure}
\begin{subfigure}{0.6\textwidth}
\centering 
\includegraphics[width=1.2\textwidth]{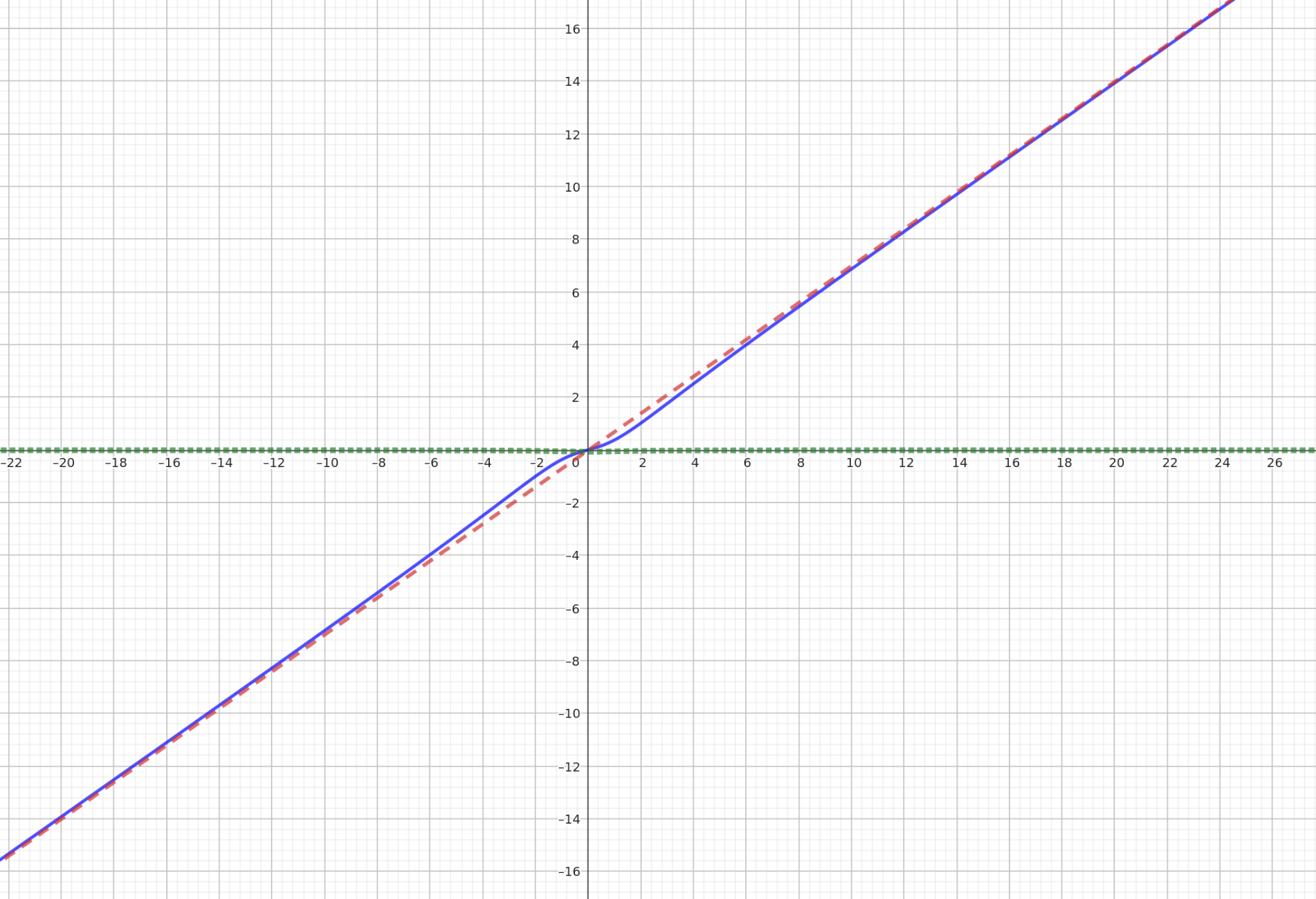}
\subcaption{$n=50$}
\end{subfigure}
\caption{Continuous line: $F_{n}$. Thick dotted line: $G_{n}$. The asymptote of $F_{n}$ is the straight dotted line.}
\label{figure/n}
\end{figure}

\ \\
\ \\
\ \\
\ \\
\ \\
\ \\
\ \\
\ \\
\ \\
\ \\
\ \\
\ \\
\ \\
\ \\
\ \\
\ \\
\ \\
\ \\

\bibliographystyle{siam}
\bibliography{biblio.bib}

\end{document}